\renewcommand{\baselinestretch}{1}
\def\be{\begin{equation}}
\def\ee{\end{equation}}
\def\qand{\quad\mbox{and}\quad}
\def\H{{\mathcal H^{s,\theta}}}
\def\Hs{{\mathcal H^{s+1,\theta}}}
\def\h{{H^{s,\theta}}}
\def\B{{\mathcal B}}
\def\p{{\tilde p}}
\DeclareMathOperator{\diag}{diag}
\def\R{{\mathbb R}}
\def\bs{\begin{split}}
\theoremstyle{plain}
\newtheorem*{main}{Main~Theorem}
\def\lm{\begin{lem}}
\def\ml{\end{lem}}
\newtheorem{thm}{Theorem}[section]
\newtheorem{cor}[thm]{Corollary}
\newtheorem{lem}[thm]{Lemma}
\newtheorem{prop}[thm]{Proposition}
\theoremstyle{definition}
\theoremstyle{remark}
\newtheorem{remark}{Remark}[section]
\numberwithin{equation}{section}
\def\be{\begin{equation}}
\def\ee{\end{equation}}
\def\qand{\quad\mbox{and}\quad}
\def\R{\mathbb R}
\def\pru{\begin{proof}}
\def\urp{\end{proof}}
\theoremstyle{definition}
\newtheorem*{theorem*}{Theorem}
\newtheorem*{corollary*}{Corollary}
\newtheorem*{acknowledgment}{Acknowledgments}
\begin{document}

\title[Wellposedness for the $2+1$ Monopole Equation]{Local Wellposedness\\ for the $2+1$ Dimensional Monopole Equation}


\author{Magdalena Czubak}
\email{czubak@math.toronto.edu}
\address{Department of Mathematics, 
         University of Toronto}
\begin{abstract}
The space-time monopole equation on $\R^{2+1}$ can be derived by a dimensional reduction of the anti-self-dual Yang Mills equations on $\R^{2+2}$.  It can be also viewed as the hyperbolic analog of Bogomolny equations.  We uncover null forms in the nonlinearities and employ optimal bilinear estimates in the framework of Wave-Sobolev spaces.  As a result, we show the equation is locally wellposed in the Coulomb gauge for initial data sufficiently small in $H^s$ for $s>\frac{1}{4}$. 
\end{abstract}
\subjclass[2000]{70S15, 35L70;}
\keywords{monopole, null form, Coulomb gauge, wellposedness;}

\maketitle
\begin{section}{Introduction}\label{intro}
In this paper we study local wellposedness of the Cauchy problem for the monopole equation on $\R^{2+1}$ Minkowski space in the Coulomb gauge.  The space-time monopole equation can be derived by a dimensional reduction from the anti-self-dual Yang Mills equations on $\R^{2+2}$, and is given by 
\be\tag{ME}
F_A=\ast D_A\phi,
\ee
where $F_A$ is the curvature of a one-form connection $A$ on $\R^{2+1}$, $D_A\phi$ is a covariant derivative of the Higgs field $\phi$, and $\ast$ is the Hodge star operator with respect to the Minkowski $\R^{2+1}$ metric.  (ME) is a hyperbolic analog of Bogomolny equations, and was first introduced by Ward \cite{Ward89} and discussed from the point of view of twistors.  Ward also studied its soliton solutions \cite{Ward99}.  Recently, Dai, Terng and Uhlenbeck gave a broad survey on the space-time monopole equation in \cite{DCU}.  In particular, using the inverse scattering transform they have shown global existence and uniqueness up to a gauge transformation for small initial data in $W^{2,1}$.  However, $L^2$ based wellposedness theory for this equation has not been investigated.  The objective of this paper is to fill this gap by specifically treating the Cauchy problem for rough initial data in $H^s$.\\
\indent
Written in coordinates, (ME) is a system of first order hyperbolic partial differential equations.  The unknowns are a pair $(A,\phi)$.
If $(A,\phi)$ solve the equation, then so do 
\[
\lambda A(\lambda t, \lambda x) \qand \lambda \phi(\lambda t, \lambda x),
\]
for any $\lambda>0$.  This results in the critical exponent $s_c=0$.  Since in general one expects local wellposedness for $s>s_c$ the goal would be to show (ME) is wellposed for $s > 0$.  Nevertheless, the two dimensions create an obstacle, which so far only allows $s > \frac{1}{4}$.  We explain this now.  In Section \ref{mesys} we choose a Coulomb gauge, and reformulate (ME) as a system of semilinear wave equations coupled with an elliptic equation, to which we refer as auxiliary monopole equations (aME).  Schematically it looks as follows
\be\tag{aME}
\bs
\square u&=\mathcal B_+(\partial u,\partial v,A_0),\\
\square v&=\mathcal B_-(\partial u,\partial v,A_0),\\
\triangle A_0&=\mathcal C(\partial u, \partial v, A_0),
\end{split}
\ee
where $\mathcal B_\pm, \mathcal C$ are bilinear forms\footnote{See Section \ref{mesys} for the precise formula for  $\mathcal B_\pm$ and $\mathcal C$.}, $A_0$ is the temporal part of the connection $A$, $\partial u, \partial v$ denote space-time derivatives of $u$ and $v$ respectively, and are given in terms of $\phi$ and the spatial part of $A$.   As a result, showing wellposedness of (ME) for $s>0$ can follow from showing (aME) is wellposed for $s>1$ (see Theorem \ref{returnME}).  Also, the most difficult nonlinearity that we have to handle is contained in $\mathcal B_\pm(\partial u,\partial v,A_0)$. Luckily, it exhibits a structure of a null form.  There are two standard null forms:
\begin{align*}
Q_0(u,v)&=-\partial_tu \partial_tv + \nabla u \cdot\nabla v,\\
Q_{\alpha\beta}(u,v)&=\partial_\alpha u \partial_\beta v - \partial_\beta u \partial_\alpha v.
\end{align*}
The null condition was introduced by Klainerman \cite{Klainerman83}, and it was first applied to produce better local wellposedness results for wave equations with a null form by Klainerman and Machedon in \cite{KlainermanMachedon93}. Indeed, in low dimensions, for these kind of nonlinearities one can assume much less regularity of the initial data than for the general products.  Counterexamples for general products were shown by Lindblad \cite{Lindblad96}.  We uncover the null form $Q_{\alpha\beta}$ in our system of wave equations as well as a new type of a null form which is related to $Q_{\alpha\beta}$.  Unfortunately, the results in two spatial dimensions for $Q_{\alpha\beta}$ are not as optimal as they are in higher dimensions or as they are for $Q_0$.  In fact, the best result in literature so far for $Q_{\alpha\beta}$ in two dimensions is due to Zhou in \cite{Zhou}.  He establishes local wellposedness for initial data in $H^s \times H^{s-1}$ for $s > \frac{5}{4}$.  In addition, by examining the first iterate Zhou shows that this is as close as one can get to the critical level using iteration methods.\footnote{The discussion of the first iterate can be also found in the appendix of Klainerman and Selberg \cite{KS}, and it can be deduced from the estimates and counterexamples found within Foschi and Klainerman \cite{FK}.}  On the other hand, for dimensions $n \geq 3$ Klainerman and Machedon \cite{KM3} showed almost optimal local wellposedness in $H^s \times H^{s-1}$ for $s>\frac{n}{2}$.  Work of Klainerman and Machedon \cite{KlainermanMachedon95} and Klainerman and Selberg \cite{KS} gives as satisfying results for $Q_0$, and in all dimensions $n \geq 2$.\\
\indent
Now, one of the nonlinearities in the system (aME) is $Q_{\alpha\beta}$, so showing (aME) is locally wellposed for $s>\frac{5}{4}$ would be sharp by iteration methods.  This is what we do, and as a result we obtain local wellposedness of (ME) in the Coulomb gauge for $s>\frac{1}{4}$ (see Main Theorem below).  However, (aME) is not exactly (ME), so we hope to treat (ME) directly in the near future and improve the results.  What should be mentioned here is that we have considered other traditional gauges such as Lorentz and Temporal, but they have not been as nearly useful as the Coulomb gauge.  Perhaps other, less traditional gauges could be used.  Moreover, we note that even the estimates involving the temporal variable $A_0$ seem to require $s>\frac{1}{4}$.
\\   
\indent 
The main result of this paper is contained in the following theorem.
\begin{main}
Let $\frac{1}{4}<s<\frac{1}{2}$ and $r \in (0,2s]$ and consider the space-time monopole equation
\be\tag{ME}
 F_A=\ast D_A\phi,  
\ee
with initial data
\[
(A_1, A_2,\phi)|_{t=0}=(a_1,a_2,\phi_0),
 \]
then (ME) in the Coulomb gauge is locally wellposed for initial data sufficiently small in $H^s(\R^2)$ in the following sense:
\begin{itemize}
\item \textbf{{(Local Existence)}} 
For all $a_1,a_2,\phi_0 \in H^s(\R^2)$ sufficiently small there exist $T>0$ depending continuously on the norm of the initial data, and functions 
\begin{align*}
A_0 \in C_b([0,T],\dot H^{r}),\quad A_1,A_2,\phi \in C_b([0,T],H^s),
\end{align*}
which solve (ME) in the Coulomb gauge on $[0,T] \times \R^2$ in the sense of distributions and such that the initial conditions are satisfied.
\item \textbf{(Uniqueness)} If $T>0$ and $(A,\phi)$ and $(A',\phi')$ are two solutions of (ME) in the Coulomb gauge on $(0,T)\times \R^2$ belonging to 
\[
C_b([0,T],\dot H^{r})\times(H^{s,\theta}_T)^3
\] 
with the same initial data, then $(A,\phi)=(A',\phi')$ on $(0,T)\times \R^2$.
\item \textbf{(Continuous Dependence on the Initial Data)} For any $a_1,a_2,\phi_0 \in H^s(\R^2)$ there is a neighborhood $U$ of $a_1,a_2,\phi_0$ in $(H^s(\R^2))^3$ such that the solution map $(a,\phi_0) \rightarrow (A,\phi)$ is continuous from $U$ into $C_b([0,T],\dot H^{r})\times (C_b([0,T], H^s))^3$.  
\end{itemize}
\end{main}
\begin{remark}
Spaces $H^{s,\theta}_T$ are defined in Section \ref{spaces}.
\end{remark}
\begin{remark}
There are two reasons for the requirement of the small initial data.  First, the construction of the global Coulomb gauge requires an assumption on the size of the data (see Section \ref{gaugesection}).  The second obstacle comes from the elliptic equation for $A_0$ in (aME), and including $A_0$ in the Picard iteration.  See Remark \ref{comparison} for further discussion.
\end{remark}
\begin{remark}
We do not prescribe initial data for $A_0,$ because when $A$ is in the Coulomb gauge, $A_0(t)$ can be determined at any time by solving the elliptic equation.  See Section \ref{mesys} for more details.
\end{remark}
\begin{remark}
For technical reasons involving estimates for $A_0$ and the regularity of the gauge transformations, in this paper we assume $\frac{1}{4}<s<\frac{1}{2}$.  See \cite{Czubak} for all $s>\frac{1}{4}$.
\end{remark}
The outline of the paper is as follows.  Section \ref{prelim} sets notation, introduces spaces, and estimates used throughout the paper.  In Section \ref{closer} we take a closer look at the equations and discuss gauge transformations.  
In Section \ref{mesys} we rewrite (ME) as a system of wave equations coupled with an elliptic equation.  We also show local wellposedness of the new system implies local wellposedness of (ME) in the Coulomb gauge.  Section \ref{main} is devoted to the proof of the Main Theorem, which is reduced to establishing estimates \eqref{M1}-\eqref{ell}.  \begin{acknowledgment}
The author would like to express deep gratitude to her thesis advisor Karen Uhlenbeck for her time, many helpful discussions, and in particular for suggesting the problem with the reformulation \eqref{system1}-\eqref{system2}.
\end{acknowledgment}
\section{Preliminaries}\label{prelim}
First we establish notation, then we introduce function spaces as well as estimates used.  
\subsection{Notation}
$a \lesssim b$ means $a \leq Cb$ for some positive constant $C$.  A point in the $2+1$ dimensional Minkowski space is written as $(t,x)=(x^\alpha)_{0 \leq \alpha \leq 2}.$  Greek indices range from $0$ to $2$, and Roman indices range from $1$ to $2$.  We raise and lower indices with the Minkowski metric $\diag(-1,1,1)$.  We write $\partial_\alpha=\partial_{x^\alpha}$ and $\partial_t=\partial_0$, and we also use the Einstein notation.  Therefore, $\partial^i \partial_i =\triangle,$ and $\partial^\alpha \partial_\alpha=-\partial^2_t +\triangle=\square$.  When we refer to spatial and time derivatives of a function $f$, we write $\partial f$, and when we consider only spatial derivatives of $f$, we write $\nabla f$.  Finally, $d$ denotes the exterior differentiation operator and $d^\ast$ its dual given by $d^\ast=(-1)^k\ast\ast\ast d \ast$, where $\ast$ is the Hodge $\ast$ operator (see for example \cite{Roe}) and $k$ comes from $d^\ast$ acting on some given $k$-form.  It will be clear from the context, when $\ast$ and $d^\ast$ operators act with respect to the Minkowski metric and when with respect to the euclidean metric.  For the convenience of the reader we include the following: with respect to the euclidean metric on $\R^2$ we have 
\[
\ast dx=dy,\quad \ast dy=-dx,\quad \ast 1=dx\wedge dy,
\]
and with respect to the $\diag(-1,1,1)$ metric on $\R^{2+1}$
\[
\ast dt=dx\wedge dy,\quad \ast dx=dt\wedge dy,\quad \ast dy=-dt\wedge dx.
\]
\subsection{Function Spaces}\label{spaces}
We use Picard iteration.  Here we introduce the spaces, in which we are going to perform the iteration\footnote{We are also going to employ a combination of the standard $L^p_tW^{s,q}_x$ spaces for $A_0$.  See Section \ref{ellipticpiece1}.}.
First we define following Fourier multiplier operators
\begin{align}
\widehat{\Lambda^\alpha f}(\xi)&=(1 + |\xi|^2)^\frac{\alpha}{2}\hat f(\xi),\\
\widehat{\Lambda^\alpha_+ u}(\tau,\xi)&=(1 + \tau^2+|\xi|^2)^\frac{\alpha}{2}\hat u(\tau,\xi),\\
\widehat{\Lambda^\alpha_- u}(\tau,\xi)&=\left(1 +\frac{ (\tau^2-|\xi|^2)^2}{1+\tau^2+|\xi|^2}\right)^\frac{\alpha}{2}\hat u(\tau,\xi),
\end{align}
where the symbol of $\Lambda^\alpha_-$ is comparable to $(1 + \big||\tau|-|\xi|\big|)^\alpha$.  The corresponding homogeneous operators are denoted by $D^\alpha,D_+^\alpha,D_-^\alpha$ respectively.
\newline\indent Now, the spaces of interest are the Wave-Sobolev spaces, $H^{s,\theta}$ and $\H$, given by\footnote{These spaces, together with results in \cite{SelbergEstimates}, allowed Klainerman and Selberg to present a unified approach to local wellposedness for Wave Maps, Yang-Mills and Maxwell-Klein-Gordon types of equations in \cite{KS}, and are now the natural choice for low regularity subcritical local wellposedness for wave equations.  Also see \cite{taobook}.}
\begin{align}
\|u\|_{H^{s,\theta}}&=\| \Lambda^s\Lambda^\theta_-u \|_{L^2(\R^{2+1})},\\
\|u\|_\H&=\|u\|_{H^{s,\theta}}+\|\partial_tu\|_{H^{s-1,\theta}}.
\end{align}
An equivalent norm for $\H$ is $\|u\|_{\H}=\| \Lambda^{s-1}\Lambda_+\Lambda^\theta_-u \|_{L^2(\R^{2+1})}$.  By results in \cite{SelbergT} if $\theta > \frac{1}{2}$, we have
\begin{eqnarray}
H^{s,\theta} &\hookrightarrow & C_b(\R,H^s),\label{embed}\\
\H &\hookrightarrow & C_b(\R,H^s)\cap C_b^1(\R,H^{s-1}).
\end{eqnarray}
This is a crucial fact needed to localize our solutions in time.  We denote the restrictions to the time interval $[0,T]$ by
\[
 H^{s,\theta}_T\qand \mathcal H^{s,\theta}_T,
\]
respectively.
\subsection{{Estimates Used}}
Throughout the paper we use the following estimates:
\begin{align}
\|D^{-\sigma} (uv) \|_{L^p_tL^q_x} &\lesssim  \|u\|_{H^{s,\theta}} \|v\|_{H^{s,\theta}},\label{A}\\
\|u\|_{L^{p}_tL^2_x} &\lesssim  \|u\|_{H^{0,\theta}},\quad 2\leq p \leq \infty,\ \theta >\frac{1}{2},\label{C}\\
\|u\|_{L^{p}_tL^q_x} &\lesssim  \|u\|_{H^{1-\frac{2}{q}-\frac{1}{p},\theta}},\quad 2\leq p \leq \infty,\ 2\leq q < \infty,\ \frac{2}{p}\leq \frac{1}{2}-\frac{1}{q},\ \theta>\frac{1}{2},\label{D}\\
\|uv\|_{L^2_{t,x}}&\lesssim\|u\|_{H^{a,\alpha}}\|v\|_{H^{b,\beta}},\quad a,b,\alpha,\beta\geq0,\ a+b>1,\ \alpha+\beta>\frac{1}{2}.\label{E}
\end{align}
Estimate \eqref{A} is a theorem of Klainerman and Tataru established in \cite{KlainermanTataru} for the space-time operator $D_+$.  The proof for the spatial operator $D$ was shown by Selberg in \cite{SelbergT}.  There are several conditions $\sigma, p,q$ have to satisfy, and they are listed in Section \ref{KTT}, where we discuss the application of the estimate.  
Estimate \eqref{C} can be proved by interpolation between $H^{0,\theta} \hookrightarrow L^2_{t,x}$ and \eqref{embed} with $s=0$.  \eqref{D} is a two dimensional case of Theorem D in Klainerman and Selberg \cite {KS}.  Finally, \eqref{E} is a special case of the proposition in Appendix A.2 also in \cite{KS}.
\end{section} 
\begin{section}{A Closer Look at the Monopole Equations}\label{closer}
\begin{subsection}{Derivation and Background}
Electric charge is quantized, which means that it appears in integer multiples of an electron.  This is called the principle of quantization and has been observed in nature.  The only theoretical proof so far was presented by Paul Dirac in 1931 \cite{Dirac}.  In the proof Dirac introduced the concept of a magnetic monopole, of an isolated point-source of a magnetic charge.  Despite extensive research magnetic monopoles have not been (yet) found in nature. We refer to the magnetic monopoles as euclidean monopoles.  The euclidean monopole equation has exactly the same form as our space-time monopole equation (ME),
\[
F_A=\ast D_A\phi,
\]
with the exception that $\ast$ acts here with respect to the euclidean metric and the base manifold is $\R^3$ instead of $\R^{2+1}$.  The euclidean monopole equations are also referred to as Bogomolny equations.  For more on euclidean monopoles we refer an interested reader to books by Jaffe and Taubes \cite{JaffeTaubes} and Atiyah and Hitchin \cite{AtiyahHitchin}.  In this paper we study the space-time monopole equation, which was first introduced by Ward \cite{Ward89}.  Both the euclidean and the space-time monopole equations are examples of integrable systems and have an equivalent formulation as a Lax pair.  This and much more can be found in \cite{DCU}.\\
\indent
Given a space-time monopole equation
\be\tag{ME}
F_A=\ast D_A\phi,
\ee
the unknowns are a pair $(A,\phi)$. $A$ is a connection $1$-form given by 
\be
A=A_0dt+A_1dx+A_2dy,\quad\mbox{where}\quad A_\alpha: \R^{2+1} \rightarrow \mathfrak{g}.
\ee
$\mathfrak{g}$ is the Lie algebra of a Lie group $G$, which is typically taken to be a matrix group $SU(n)$ or $U(n)$.  In this paper we consider $G=SU(n),$ but everything we say here should generalize to any compact Lie group.\\  
\indent
To be more general we could say $A$ is a connection on a principal G-bundle.  Then observe that the G-bundle we deal here with is a trivial bundle $\R^{2+1} \times G$.  
\newline \indent
Next, $\phi$ is a section of a vector bundle associated to the G-bundle by a representation.  We use the adjoint representation.  Since we have a trivial bundle, we can just think of the Higgs field $\phi$ as a map from $\R^{2,1} \rightarrow \mathfrak{g}$.\\
\indent
$F_A$ is the curvature of $A$.  It is a Lie algebra valued $2$-form on $\R^{2+1}$
\be
F_A=\frac{1}{2}F_{\alpha\beta}dx^\alpha\wedge dx^\beta,\quad\mbox{where}\quad F_{\alpha\beta}=\partial_\alpha A_\beta -\partial_\beta A_\alpha +[A_\alpha,A_\beta]. \label{curv}
\ee
$[\cdot,\cdot]$ denotes the Lie bracket, which for matrices can be thought of simply as $[X,Y]=XY-YX$.  When we write $[\phi,B]$, where $B$ is a $1$-form, we mean
\be
[\phi,B]=[\phi,B_i]dx^i\qand [B,C]=\frac{1}{2}[B_i,C_j]dx^i\wedge dx^j,\mbox{ for two $1$-forms $B,C$.}
\ee
In the physics language, frequently adopted by the mathematicians, $A$ is called a gauge potential, $\phi$ a scalar field and $F_A$ is called an electromagnetic field.
\newline\indent Next, $D_A$ is the covariant exterior derivative associated to $A$, and $D_A\phi$ is given by
\be
D_A \phi=D_\alpha \phi dx^\alpha,\quad\mbox{where}\quad D_\alpha\phi=\partial_\alpha \phi + [A_\alpha,\phi].
\ee
\indent    
The space-time monopole equation (ME) is obtained by a dimensional reduction of the anti-self-dual Yang Mills equations on $\R^{2+2}$ given by
\be\tag{ASDYM}
F_A=-\ast F_A.
\ee
We now present the details of the derivation of (ME) from (ASDYM) outlined in \cite{DCU}.  Let 
\[
dx_1^2 + dx_2^2 - dx_3^2 - dx_4^2
\]
be a metric on $\R^{2+2}$, then in coordinates (ASDYM) is
\be
F_{12}=-F_{34},\quad F_{13}=-F_{24},\quad F_{23}=F_{14}.\label{asdymcoor}
\ee
Next step is the dimensional reduction, where we assume the connection $A$ is independent of $x_3$, and we let $A_3=\phi$.  Then \eqref{asdymcoor} becomes
\be
D_0 \phi=F_{12}, \quad D_1 \phi=F_{02}, \quad D_2 \phi=F_{10},
\ee
where we use index $0$ instead of $4$. This is exactly (ME) written out in components.
\begin{remark}
Equivalently we could write (ME) as
\be
F_{\alpha\beta}=-\epsilon_{\alpha\beta\gamma}D^\gamma \phi,
\ee
where $\epsilon_{\alpha\beta\gamma}$ is a completely antisymmetric tensor with $\epsilon_{012}=1$, and where we raise the index $\gamma$ using the Minkowski metric.  We choose to work with the Hodge operator $\ast$ as it simplifies our task in Section \ref{mesys}.
\end{remark}
\indent
There is another way to write (ME) \cite{DCU}, which is very useful for computations.  (ME) is an equation involving $2$-forms on both sides.  By taking the parts corresponding to $dt\wedge dx$ and $dt\wedge dy$, and the parts corresponding to $dx\wedge dy$ we can obtain the following two equations respectively
\begin{align}
\partial_t A + [A_0, A]-dA_0&=\ast d\phi + [\ast A, \phi],\label{m1a}\\
dA + [A,A]&=\ast(\partial_t \phi + [A_0, \phi]).\label{m2a}
\end{align}
Observe that now operators $d$ and $\ast$ act only with respect to the spatial variables.  Similarly, $A$ now denotes only the spatial part of the connection, i.e., $A=(A_1,A_2)$.  Moreover, \eqref{m1a} is an equation involving $1$-forms, and \eqref{m2a} involves $2$-forms.\\
\end{subsection}
\subsection{Gauge Transformations}\label{gaugesection}
(ME) is invariant under gauge transformations.  Indeed, if we have a smooth map $g$, with compact support such that $g: \R^{2+1} \rightarrow G$, and
\begin{align}
A \rightarrow A_g&=gAg^{-1} + gdg^{-1},\\
\phi \rightarrow \phi_g&=g\phi g^{-1},
\end{align}
then a computation shows $F_A \rightarrow g F_A g^{-1}$ and $D_A\phi \rightarrow gD_A\phi g^{-1}$.  Therefore if a pair $(A,\phi)$ solves (ME), so does $(A_g,\phi_g)$.\\
\indent 
We would like to discuss the regularity of the gauge transformations.  If $A\in X, \phi \in Y$ where $X, Y$ are some Banach spaces, the smoothness and compact support assumption on $g$ can be lowered just enough so the gauge transformation defined above is a continuous map from $X$ back into $X$, and from $Y$ back into $Y$.  First note that since we are mapping into a compact Lie group, we can assume $g\in L^\infty_{t,x}$ and $\|g\|_{L^\infty_{t,x}}=\|g^{-1}\|_{L^\infty_{t,x}}.$  Next, note that the Main Theorem produces a solution so that $\phi$ and the spatial parts of the connection $A_1,A_2 \in C_b(I,H^s), \frac{1}{4}<s<\frac{1}{2}$, and $A_0 \in C_b(I,\dot H^r), r \in (0, 2s]$.  We have the following 
\lm\label{gaugeaction1}
Let $0<\alpha<1$, and $Y=C_b(I,\dot H^{1}\cap\dot H^{\alpha+1})\cap L^\infty,$ then the gauge action is a continuous map from
\be
\bs
C_b(I, H^\alpha) &\times Y  \rightarrow C_b(I, H^\alpha)\\
&(h,g) \mapsto ghg^{-1} + gdg^{-1},
\end{split}
\ee
and the following estimate holds:
\be
\|h_g\|_{C_b(I, H^\alpha)} \lesssim (\|h\|_{C_b(I, H^\alpha)}+1)\|g\|^2_Y.\label{hg}
\ee
\ml
\begin{proof}
The continuity of the map easily follows from the inequalities we obtain below.  Next, for fixed $t$ we have
\[
\|g(t)h(t)g^{-1}(t)+g(t)dg^{-1}(t)\|_{H^\alpha}\lesssim \|ghg^{-1}\|_{L^2}+\|D^\alpha(ghg^{-1})\|_{L^2} + \|gdg^{-1}\|_{H^\alpha},
\]
where for the ease of notation we eliminated writing of the variable $t$ on the right hand side of the inequality.  The first term is bounded by $\|h(t)\|_{H^\alpha}\|g\|_{L^\infty}^2$.  For the second one we have
\be\nonumber
\|D^\alpha(ghg^{-1})\|_{L^2} \lesssim \|D^\alpha gh\|_{L^2}\|g\|_{L^\infty}+\|hD^\alpha g^{-1}\|_{L^2}\|g\|_{L^\infty} +\|h\|_{\dot H^\alpha}\|g\|^2_{L^\infty}.
\ee
It is enough to only look at the first term since $g$ and $g^{-1}$ have the same regularity.  By H\"older's inequality and Sobolev embedding 
\be
\|D^\alpha gh\|_{L^2}\leq \|D^\alpha g\|_{L^{2/\alpha}}\|h\|_{L^{(1/2-\alpha/2)^{-1}}}\lesssim \|g\|_{\dot H^1}\|h\|_{\dot H^\alpha},\label{gA}
\ee
where we use that $\frac{\alpha}{2}=\frac{1}{2}-\frac{1-\alpha}{2}.$
Finally for the last term we have
\be
\|gdg^{-1}\|_{H^\alpha}\lesssim \|g\|_{\dot H^1}\|g\|_{L^\infty}+\|D^\alpha gdg^{-1}\|_{L^2}+\|g\|_{\dot H^{\alpha+1}}\|g\|_{L^\infty},
\ee
and we are done if we observe that the second term can be handled exactly as in \eqref{gA}.
\end{proof}
\begin{remark}
We assume $0<\alpha<1$ since this is the case we need.  However it is not difficult to see the lemma still holds with $\alpha=0$ or $\alpha \geq 1$;  see \cite{Czubak}.
\end{remark}
From the lemma, we trivially obtain the following corollary.
\begin{cor}\label{welld} 
Let $0<r,s<1$, $X=C_b(I,\dot H^r)\times C_b(I, H^s)\times C_b(I,H^s)$ and
$Y=C_b(I,\dot H^{1}\cap \dot H^{s+1}\cap \dot H^{r+1})\cap L^\infty.$  Then the gauge action is a continuous map from
\be
\bs
X \times Y &\rightarrow X\\
(A_0,A_1,A_2) 							& \mapsto A_g,
\end{split}
\ee
as well as from
\be
\bs
C_b(I,H^s) \times Y &\rightarrow C_b(I,H^s)\\
\phi 							& \mapsto \phi_g=g\phi g^{-1}, 
\end{split}
\ee
and the following estimates hold
\begin{align}
\|A_g\|_X \lesssim (1+\|A\|_X)\|g\|^2_Y,\\
\|\phi_g\|_{C_b(I,H^s)} \lesssim \|\phi\|_{C_b(I,H^s)}\|g\|^2_Y.
\end{align}
\end{cor}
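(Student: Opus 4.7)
The plan is to apply Lemma \ref{gaugeaction1} componentwise: once with $\alpha = s$ for each spatial slot $A_1, A_2$ and for $\phi$, and once with $\alpha = r$ for the temporal slot $A_0$, and then to observe that the space $Y$ is precisely the intersection of the requirements the lemma places on $g$ across these choices.

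First I would run Lemma \ref{gaugeaction1} with $\alpha = s$ on each of $A_1, A_2 \in C_b(I, H^s)$. This produces transformed components $gA_ig^{-1} + g\,\partial_i g^{-1} \in C_b(I, H^s)$ and requires the factor $C_b(I, \dot H^1 \cap \dot H^{s+1}) \cap L^\infty$ of $Y$, contributing a bound of the form $\lesssim (1 + \|A_i\|_{C_b(I,H^s)})\|g\|_Y^2$ via \eqref{hg}. For $A_0 \in C_b(I, \dot H^r)$, I would apply the same argument with $\alpha = r$, using the factor $C_b(I, \dot H^1 \cap \dot H^{r+1}) \cap L^\infty$ of $Y$; since only a homogeneous $\dot H^r$ estimate is required, the proof of Lemma \ref{gaugeaction1} goes through after simply discarding the line bounding $\|ghg^{-1}\|_{L^2}$, because the remaining steps use only homogeneous Sobolev norms. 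Summing the three component bounds yields
\[
\|A_g\|_X \lesssim (1 + \|A\|_X)\|g\|_Y^2.
\]

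The Higgs-field case is strictly easier: there is no $g\,dg^{-1}$ term to estimate, so only the conjugation portion of the proof of Lemma \ref{gaugeaction1} is needed, giving $\|\phi_g\|_{C_b(I, H^s)} \lesssim \|\phi\|_{C_b(I, H^s)}\|g\|_Y^2$ directly (no additive $+1$, since every term carries a $\phi$-factor). Continuity of both maps then follows by feeding differences $(A - A', g - g')$ and $(\phi - \phi', g - g')$ through the same inequalities, exactly as noted in the first line of the proof of Lemma \ref{gaugeaction1}. I do not anticipate any genuine obstacle; the one point that needs slight care is the passage to homogeneous Sobolev norms for the $A_0$ slot, but this costs nothing because the $L^2$ term in the lemma's proof was only used to account for the inhomogeneous part of the $H^\alpha$ norm and plays no role elsewhere. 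This is why the corollary is flagged as trivial.
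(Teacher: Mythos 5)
Your proposal is correct and is essentially the same reduction the paper intends: apply Lemma \ref{gaugeaction1} componentwise with $\alpha = s$ for $A_1, A_2, \phi$ and $\alpha = r$ for $A_0$, observing that $Y$ is exactly the intersection of the regularity requirements on $g$ across these choices. The paper gives no proof beyond the remark that the corollary follows ``trivially'' from the lemma, and your two marginal adjustments — dropping the $L^2$ pieces for the homogeneous target $\dot H^r$, and dropping the $g\,dg^{-1}$ term (hence the additive $+1$) for the pure conjugation action on $\phi$ — are precisely the right ones.
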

Since in this paper we work in the Coulomb gauge, we ask: given any initial data $a_1, a_2, \phi_0 \in H^s(\R^2)$, can we find a gauge transformation so that the initial data is placed in the Coulomb gauge?  Dell'Antonio and Zwanziger produce a global $\dot H^1$ Coulomb gauge using variational methods \cite{DZ}.  Here, we also require $g \in \dot H^{s+1}$, and two dimensions are tricky.  Fortunately, if the initial data is small, we can obtain a global gauge with the additional regularity as needed.  This is considered by the author and Uhlenbeck for two dimensions and higher in \cite{CzubakUhlenbeck}.  The result in two dimensions is the following
\begin{thm}(\cite{CzubakUhlenbeck})\label{coulombg}
Given $A(0)=a$ sufficiently small in $H^s(\R^2)\times H^s(\R^2)$, there exists a gauge transformation $g \in \dot H^{s+1}(\R^2) \cap \dot H^{1}(\R^2)\cap L^\infty$ so that $\partial^i(ga_ig^{-1}+g\partial_ig^{-1})=0$.
\end{thm}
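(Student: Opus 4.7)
The plan is to solve for the gauge transformation via a contraction-mapping argument, writing $g = e^u$ with $u : \R^2 \to \mathfrak{g}$. Substituting this ansatz into the Coulomb condition $\partial^i(g a_i g^{-1} + g \partial_i g^{-1}) = 0$ and expanding the matrix exponentials as power series produces an elliptic equation of the form
\begin{equation*}
\triangle u = -\partial^i a_i + N(u, a),
\end{equation*}
where $N(u,a)$ collects the nonlinear corrections coming from $e^u a e^{-u} - a$ and $e^u d e^{-u} + du$. Its leading-order terms are $\partial^i [u, a_i]$ and $[\partial^i u, \partial_i u]$, followed by a convergent sum of higher nested commutators; every term is at least quadratic in $(u, a)$.

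Next I would introduce the Banach space
\begin{equation*}
X = \dot H^{s+1}(\R^2) \cap \dot H^{1}(\R^2) \cap L^\infty(\R^2)
\end{equation*}
and study the map $\Phi(u) = \triangle^{-1}\bigl(-\partial^i a_i + N(u, a)\bigr)$. Since $\triangle^{-1}: \dot H^{\sigma-1} \to \dot H^{\sigma+1}$ is an isomorphism for every $\sigma \in \R$, and since $a \in H^s$ gives $\partial^i a_i \in \dot H^{s-1} \cap \dot H^{-1}$, the linear piece $\triangle^{-1}(-\partial^i a_i)$ automatically lies in $\dot H^{s+1} \cap \dot H^1$; its $L^\infty$ bound then follows from Morrey's embedding applied to $\nabla u \in L^{2/(1-s)}(\R^2)$ for $s > 0$.

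The core of the argument is to estimate $N(u, a)$ in $\dot H^{s-1} \cap \dot H^{-1}$ using product inequalities of the schematic type
\begin{align*}
\|fg\|_{\dot H^{s-1}} &\lesssim \|f\|_{L^\infty}\|g\|_{\dot H^{s-1}} + \|f\|_{\dot H^{s+1}}\|g\|_{\dot H^{-1}}, \\
\|fg\|_{\dot H^{-1}} &\lesssim \|f\|_{L^\infty}\|g\|_{\dot H^{-1}},
\end{align*}
applied termwise to the series defining $N$. Together with the linear bound, these show that $\Phi$ sends a sufficiently small ball $B_\delta(0) \subset X$ into itself and contracts there, provided $\|a\|_{H^s}$ is small enough. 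The fixed point $u \in X$ then produces $g = e^u$ with values in $G$ (by compactness of the group), and real-analyticity of the exponential transfers the $X$-regularity of $u$ to $g$, yielding $g \in \dot H^{s+1} \cap \dot H^1 \cap L^\infty$ as required.

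The main obstacle is the borderline Sobolev regime in two dimensions: $\dot H^1(\R^2)$ does not embed into $L^\infty$, so the nonlinear estimates cannot be closed using only the $\dot H^1$ control that suffices in the Dell'Antonio--Zwanziger variational construction. This is what forces the additional $\dot H^{s+1}$ regularity on $g$, and it is also the structural reason for the smallness assumption — the scale-invariant $L^\infty$ norm cannot be absorbed by any rescaling, so the nonlinear terms must be kept small from the outset by assuming $a$ small.
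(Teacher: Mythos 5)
The paper does not prove this theorem: it is quoted from the reference \cite{CzubakUhlenbeck}, and the present article offers only surrounding remarks (smallness of the data is needed; the Dell'Antonio--Zwanziger variational construction gives only $\dot H^1$ regularity; two dimensions are delicate). So there is no internal proof to compare you against. Your perturbative, implicit-function-type strategy is the natural one here and is consistent with those remarks, so the broad outline is plausible. I'll therefore comment on the proposal on its own terms.

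There is a genuine gap in the $L^\infty$ step, and it is precisely at the point you flag as ``the main obstacle.'' You claim the extra $\dot H^{s+1}$ regularity on $u$ rescues the failure of $\dot H^1(\R^2)\hookrightarrow L^\infty$, with Morrey providing the bound. But Morrey's inequality gives H\"older continuity from $\nabla u\in L^p$, $p>2$, not a global sup bound; and, more to the point, $\dot H^1(\R^2)\cap\dot H^{s+1}(\R^2)\not\hookrightarrow L^\infty(\R^2)$. The obstruction is at low frequency (large scales), which $\dot H^{s+1}$ does not see at all. A concrete counterexample: $u(x)=\log\log(3+|x|)$ is smooth and unbounded, with $|\nabla u|\sim\bigl(|x|\log|x|\bigr)^{-1}$ at infinity, so $\nabla u\in L^2$ and in fact $u\in\dot H^1\cap\dot H^{s+1}$ for $0<s<1$; rescaling $u\mapsto\epsilon u$ makes both homogeneous norms arbitrarily small without producing $L^\infty$ membership. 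Taking $a_i=\partial_i u$ then gives $a\in H^s$ arbitrarily small for which your base point $\Phi(0)=-\triangle^{-1}\partial^i a_i=-u$ already fails to lie in $X=\dot H^{s+1}\cap\dot H^1\cap L^\infty$, so the ball $B_\delta(0)\subset X$ is not preserved and the contraction cannot close as written. Note also that the $L^\infty$ statement in the theorem's conclusion is for the group-valued $g$ (which is automatic, $G$ being compact), not for a Lie-algebra lift $u$ of it; to run the $g=e^u$ ansatz you would need to control $u$ in $L^\infty$ by a separate device — for instance working in inhomogeneous spaces at low frequency, splitting frequencies and treating the low modes by hand, or avoiding the exponential ansatz entirely. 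The product estimates you quote in $\dot H^{-1}$ and $\dot H^{s-1}$ would also need to be justified carefully (negative-index paraproduct bounds in $\R^2$ are not symmetric in the way your schematic suggests), but the $L^\infty$ issue is the one that breaks the scheme outright.
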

\end{section}



\section{The Monopole Equation in the Coulomb Gauge as a system of Wave \& Elliptic Equations}\label{mesys}
We begin with a proposition, where we show how we can rewrite the monopole equation in the Coulomb gauge as a system of wave equations coupled with an elliptic equation, to which we refer as the auxiliary monopole equations (aME).  Then we have an important result that states that local wellposedness (LWP) for (ME) in the Coulomb gauge can be obtained from LWP of (aME). 
\begin{prop}
The monopole equation, $F_A=\ast D_A \phi$ on $\R^{2+1}$ in the Coulomb gauge with initial data  
\be\label{id0}
A_i|_{t=0}=a_i,\quad i=1,2 \quad\mbox and \mbox\quad \phi|_{t=0}=\phi_0  
\ee
with $\partial^i a_i=0$ can be rewritten as the following system
\[ \mbox{(aME)}\quad
   \left\{ \begin{array}{l} 
\begin{split}
\square u     &= \mathcal B_+(\phi,\nabla f,A_0),\nonumber\\
\square v     &= \mathcal B_-(\phi,\nabla f,A_0),\nonumber\\
\triangle A_0 &=\mathcal C(\phi,\nabla f,A_0),\nonumber
\end{split}
\end{array} \right. \]
where
\begin{align}
&\mathcal C=-\partial_1[A_0,\partial_2 f]+\partial_2[A_0,\partial_1 f]+\partial_i[\partial_if,\phi],\\
&\mathcal B_\pm=-\mathcal B_1 \pm\mathcal B_2+\mathcal B_3\pm\mathcal B_4,\label{Bpm}
\end{align}
and
\begin{equation}\label{bis}
\begin{split}
\mathcal B_1&=[\partial_1f,\partial_2f],\\
\mathcal B_2&=R_1[\partial_2f,\phi]-R_2[\partial_1f,\phi],\\
\mathcal B_3&=[A_0,\phi],\\
\mathcal B_4&=R_j[A_0,\partial_j f],
\end{split}
\end{equation}
with $R_j$ denoting Riesz transform, $(-\triangle)^{-\frac{1}{2}}\partial_j.$
The initial data for (aME) is given by
\begin{equation}\label{id}
\begin{split}
u(0)&=v(0)=0,\\
\partial_t u(0)&=\phi_0 +  h,\\ 
\partial_t v(0)&=\phi_0 -  h, 
\end{split}
\end{equation}
where $h=R_1 a_2-R_2 a_1$.
\end{prop}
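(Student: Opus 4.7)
The plan is to unpack (ME) in the Coulomb gauge by direct computation, extracting first the elliptic equation for $A_0$ and then packaging the remaining evolution as two wave equations. First, the Coulomb condition $\partial^i A_i = 0$ on $\R^2$ lets me introduce a scalar potential $f$ with $A_1 = -\partial_2 f$, $A_2 = \partial_1 f$, so that the spatial connection is encoded in $\nabla f$. Using the split form \eqref{m1a}--\eqref{m2a}, the $2$-form part reads $\partial_t\phi + [A_0,\phi] = \triangle f + [\partial_1 f, \partial_2 f]$, i.e.\ $\partial_t\phi = \triangle f + \mathcal B_1 - \mathcal B_3$; the $1$-form part gives evolution equations for $A_1, A_2$ in terms of $\partial_i A_0$, $\partial_i\phi$, and commutators with $\phi$ and $A_0$.

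Next I would extract the elliptic equation by taking the spatial divergence $\partial^i$ of \eqref{m1a}. The Coulomb condition kills $\partial^i\partial_t A_i$, and $\partial^i$ annihilates the $(\ast d\phi)_i$-type gradient contribution. Substituting $A_i$ in terms of $\partial_j f$ and rearranging the surviving commutator terms, one arrives at $\triangle A_0 = \mathcal C$ as stated.

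For the wave equations, I introduce $h := R_1 A_2 - R_2 A_1 = -(-\triangle)^{1/2} f$, so that $\triangle f = (-\triangle)^{1/2} h$. Computing $\partial_t h$ by applying the Riesz combination to the $\partial_t A_i$ equations, I use the operator identities $R_1\partial_1 + R_2\partial_2 = -(-\triangle)^{1/2}$ and $R_1\partial_2 - R_2\partial_1 = 0$: the first extracts the principal part $-(-\triangle)^{1/2}\phi$ from the $\nabla\phi$ terms, the second kills the $\nabla A_0$ contribution, and the Riesz transforms of the remaining commutators collapse into exactly $-(\mathcal B_2 + \mathcal B_4)$. Combined with the $\partial_t\phi$ equation, this diagonalizes into the first-order system
\begin{equation*}
\partial_t(\phi \pm h) \pm (-\triangle)^{1/2}(\phi \mp h) = -\mathcal B_\pm.
\end{equation*}
I then introduce $u, v$ as the new unknowns in the reformulated system, defined so that they satisfy the prescribed Cauchy problems $\square u = \mathcal B_+$ and $\square v = \mathcal B_-$. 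Via the half-wave factorization $\square = -(\partial_t - (-\triangle)^{1/2})(\partial_t + (-\triangle)^{1/2})$ applied to the diagonalized system above, the wave equations for $u, v$ repackage the first-order dynamics for $\phi \pm h$, and the initial data $u(0)=v(0)=0$, $\partial_t u(0) = \phi_0 + h$, $\partial_t v(0) = \phi_0 - h$ come directly from the Cauchy data for $(A,\phi)$ together with $h(0) = R_1 a_2 - R_2 a_1$.

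The main obstacle is the bookkeeping in the computation of $\partial_t h$: every commutator has to be fed through the appropriate Riesz transform, the Coulomb-gauge cancellations have to be invoked at the right moment, and the residual bilinear pieces have to be collected into exactly the four quantities $\mathcal B_1,\ldots,\mathcal B_4$ with the correct signs assembling into $\mathcal B_\pm$. Once this is done, producing the elliptic equation and matching initial data for $u, v$ is routine.
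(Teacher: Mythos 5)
Your reduction through the first-order system is essentially the paper's derivation translated to physical space, and most of it is sound: setting $A_1=-\partial_2 f$, $A_2=\partial_1 f$, extracting $\triangle A_0=\mathcal C$ by taking $\partial^i$ of \eqref{m1a}, reading $\partial_t\phi=\triangle f+\mathcal B_1-\mathcal B_3$ off the $2$-form part, and computing $\partial_t h=-D\phi-\mathcal B_2-\mathcal B_4$ with $h=R_1A_2-R_2A_1=-Df$ (here $D=(-\triangle)^{1/2}$). The resulting first-order system $\partial_t(\phi\pm h)\pm D(\phi\mp h)=-\mathcal B_\pm$ is correct, and the initial data $u(0)=v(0)=0$, $\partial_t u(0)=\phi_0+h$, $\partial_t v(0)=\phi_0-h$ are the right ones.

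The last step, however, has a genuine gap. First, the system you wrote is \emph{not} diagonal: the equation for $\partial_t(\phi+h)$ contains $D(\phi-h)$, not $D(\phi+h)$, so the two unknowns remain coupled, and you cannot invert a scalar first-order operator acting on a single unknown. In matrix form the principal part is $\partial_t I+\left(\begin{smallmatrix}0&D\\-D&0\end{smallmatrix}\right)$, whose eigenvalues are $\pm iD$; the genuinely diagonalizing variables are $\phi\pm ih$ (equivalently the paper's Fourier-side $\hat\phi\pm i|\xi|\hat f$), not $\phi\pm h$. Second, the claimed factorization $\square=-(\partial_t-D)(\partial_t+D)$ is false: $(\partial_t-D)(\partial_t+D)=\partial_t^2-D^2=\partial_t^2+\triangle$, so $-(\partial_t-D)(\partial_t+D)=-\partial_t^2-\triangle$, which is minus the space-time Laplacian, not $\square=-\partial_t^2+\triangle$. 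The correct factorization is $\square=-(\partial_t-iD)(\partial_t+iD)$, and it is precisely to use it that the paper introduces $\hat\phi\pm i|\xi|\hat f=(\partial_t\pm i|\xi|)\hat u$ (resp.\ $\hat v$), which turns $(\partial_t\mp i|\xi|)(\hat\phi\pm i|\xi|\hat f)=-\hat{\mathcal B}_\pm$ into $(\partial_t^2+|\xi|^2)\hat u=-\hat{\mathcal B}_+$, i.e.\ $\square u=\mathcal B_+$.

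If you want to stay in your real variables and avoid $i$, you can instead make the \emph{coupled} ansatz $\phi+h=\partial_t u-Dv$, $\phi-h=\partial_t v+Du$. Substituting this into your first-order system, the cross terms $\mp D\partial_t v$, $\pm D\partial_t u$ cancel and you get $\partial_t^2 u+D^2u=-\mathcal B_+$ and $\partial_t^2 v+D^2 v=-\mathcal B_-$, i.e.\ $\square u=\mathcal B_+$, $\square v=\mathcal B_-$, with the same initial data you stated. That is a legitimate route, but it is not the half-wave factorization you invoked and it is not applied to a diagonal system; as written, the step from the first-order system to the two wave equations does not go through.
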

\begin{remark}\label{comparison}
(aME) has some resemblance to a system considered by Selberg in \cite{Selberg} for the Maxwell-Klein-Gordon (MKG) equations, where he successfully obtains almost optimal local wellposedness in dimensions $1+4$.  Besides the dimension considered, there are two fundamental technical differences applicable to our problem. First comes from the fact that the monopole equation we consider here is an example of a system in the non-abelian gauge theory whereas (MKG) is an example of a system in the abelian gauge theory.  The existence of a global Coulomb gauge requires smallness of initial data in the non-abelian gauge theories, but is not needed in the abelian theories.  Another technical difference arises from Selberg being able to solve the elliptic equation for his temporal variable $A_0$ using Riesz representation theorem, where he does not require smallness of the initial data.  The elliptic equation in (aME) is more difficult, so we include $A_0$ in the Picard iteration.  As a result we are not able to allow large data by taking a small time interval, which we could do if we only had the two wave equations.  Finally, we point out that the proof of our estimates involving $A_0$ is modeled after Selberg's proof in \cite{Selberg} (see Remark \ref{estinMKG} and Section \ref{ellipticpiece1}).
\end{remark}
\begin{proof}
Recall equations \eqref{m1a} and \eqref{m2a}
\begin{eqnarray}
\partial_t A + [A_0, A]-dA_0=\ast d\phi + [\ast A, \phi]\label{m1},\\
dA + [A,A]=\ast(\partial_t \phi + [A_0, \phi]),\label{m2}
\end{eqnarray}
where $d$ and $\ast$ act only with respect to the spatial variables, and $A$ denotes only the spatial part of the connection.
If we impose the Coulomb gauge condition, then
\be
d^\ast A=0.
\ee
By equivalence of closed and exact forms on $\R^n$, we can further suppose that
\be\label{aisdf}
A=\ast df,
\ee  
for some $f:\R^{2+1} \rightarrow \mathfrak g$.  Observe 
\be d \ast d f=\triangle f dx\wedge dy, \quad [\ast df, \ast df]=[df,df]=\frac{1}{2}[\partial_if,\partial_jf]dx^i\wedge dx^j,
\ee
and $\ast\ast \omega=-\omega$ for a one-form on $\R^2$.  It follows \eqref{m1} and \eqref{m2} become
\begin{eqnarray}
\partial_t \ast df + [A_0, \ast df]-dA_0=\ast d\phi - [df, \phi],\label{e1'}\\
\triangle f + [\partial_1f, \partial_2f]=\partial_t \phi + [A_0, \phi].\label{e2'}
\end{eqnarray}
Take $d^\ast$ of \eqref{e1'} to obtain
\[
\triangle A_0= d^\ast [A_0, \ast df]+d^\ast[df, \phi].
\]
This is the elliptic equation in (aME).  Now take $d$ of \eqref{e1'}
\be
\partial_t \triangle f + \partial^j[A_0, \partial_jf]=\triangle \phi + \partial_2[\partial_1f,\phi]- \partial_1[\partial_2f,\phi].\label{e1'''}
\ee
Consider \eqref{e1'''} and \eqref{e2'} on the spatial Fourier transform side
\begin{align}
-\partial_t |\xi|^2 \hat f +|\xi|^2 \hat \phi & = i (\xi_2\widehat{[\partial_1f,\phi]}- \xi_1\widehat{[\partial_2f,\phi]} -\xi_j\widehat{[A_0, \partial_jf]}) \label{eI}\\
-|\xi|^2 \hat f -\partial_t \hat \phi &=- \widehat{[\partial_1f, \partial_2f]} + \widehat{[A_0, \phi]}.\label{eII}
\end{align}
This allows us to write \eqref{eI} and \eqref{eII} as a system for $\phi$ and $df$
\begin{eqnarray}
(\partial_t - i |\xi|)(\hat \phi + i |\xi|\hat f)= -\hat \B_{+}(\phi,df,A_0),\label{system1}\\
(\partial_t + i |\xi|)(\hat \phi - i |\xi|\hat f)= -\hat \B_{-}(\phi,df,A_0),\label{system2}
\end{eqnarray}
where
\be
\hat \B_{\pm}=-\widehat{[\partial_1f, \partial_2f]} + \widehat{[A_0,\phi]} \pm \left(\frac{\xi_1}{|\xi|}\widehat{[\partial_2f,\phi]}-\frac{\xi_2}{|\xi|}\widehat{[\partial_1f,\phi]}+\frac{\xi_j}{|\xi|}\widehat{[A_0, \partial_jf]}\right).
\ee
Indeed, multiply \eqref{eI} by $\frac{i}{|\xi|}$, and first add the resulting equation to \eqref{eII} to obtain \eqref{system1}, and then subtract it from \eqref{eII} to obtain \eqref{system2}.  To uncover the wave equation, we let 
\be
\hat \phi + i |\xi|\hat f=(\partial_t + i|\xi|)\hat u \qand  \hat \phi - i |\xi|\hat f=(\partial_t - i|\xi|)\hat v\label{two},
\ee
where $u,v: \R^{2+1} \rightarrow \mathfrak g$.  See remark \ref{iremark} below.\\    
\indent Now we discuss initial data.  
From (\ref{two}) 
\be
\partial_t \widehat {u(0)}=\hat \phi_0 + i|\xi|\widehat {f(0)} - i|\xi|\widehat {u(0)},
\ee
and
\be
\partial_t \widehat {v(0)}=\hat \phi_0 - i|\xi|\widehat {f(0)} + i|\xi|\widehat {v(0)}.
\ee
Note, we are free to choose any data for $u$ and $v$ as long as in the end we can recover the original data for $\phi$ and $A$.  Hence we just let $u(0)=v(0)=0$.  We still need to say what $|\xi|\widehat{f(0)}$ is.  Let $\hat h=i|\xi|\widehat {f(0)}$.  By \eqref{id0} and \eqref{aisdf} 
\[
a_1=A_1(0)=-\partial_2 f(0),\quad a_2=A_2(0)=\partial_1 f(0),
\]
so we need
\[
R_2h=-a_1,\quad R_1h=a_2.
\]  
Differentiate the first equation with respect to $x$, the second with respect to $y$, and add them together to obtain
\be
\triangle D^{-1} h=\partial_1 a_2 - \partial_2 a_1,
\ee
as needed.
\end{proof}
\begin{remark}\label{iremark}
$u$ and $v$ are our new unknowns,  but we are really interested in $\phi$ and $df$.  Therefore, we observe that once we know what $u$ and $v$ are, we can determine $\phi$ and $df$ by using
\be\label{phidf}
\begin{split}
\hat \phi=\frac{(\partial_t + i|\xi|)\hat u + (\partial_t - i|\xi|)\hat v}{2},\\
i|\xi| \hat f=\frac{(\partial_t + i|\xi|)\hat u - (\partial_t - i|\xi|)\hat v}{2},
\end{split}   
\ee
or equivalently
\be\label{newphidf}
\bs
\phi&=\frac{(\partial_t + iD)u+(\partial_t - iD)v}{2},\\
\partial_j f&=R_j\left(\frac{(\partial_t + iD)u-(\partial_t - iD)v}{2}\right).
\end{split}
\ee
From $df$ we get $A$ by letting $A=\ast df$.  Finally, with the exception of the nonlinearity $\mathcal B_2$ when we discuss our estimates in Section \ref{main}, for simplicity we keep the nonlinearities in terms of $\phi$ and $df$.
However, since $\phi$ and $df$ can be written in terms of derivatives of $u$ and $v$ we sometimes write $\B_\pm(\phi,df,A_0)$ as $\B_\pm(\partial u,\partial v,A_0)$. 
\end{remark}
Next we have a theorem, where we show how LWP for (aME) implies LWP for (ME) in the Coulomb gauge.  For completeness, we first state exactly what we mean by LWP of (aME).\\
\indent Let $r\in (0,\min(2s,1+s)], s>0$.  Consider the system (aME) with initial data
\[
(u,u_t)|_{t=0}=(u_0,u_1)\qand(v,v_t)|_{t=0}=(v_0,v_1)
\]
in $H^{s+1}\times H^s$, then (aME) is LWP if:
\newline\noindent
\textit{\textbf{{(Local Existence)}}} 
There exist $T>0$ depending continuously on the norm of the initial data, and functions 
\begin{align*}
A_0 \in C_b([0,T],\dot H^{r}),\quad
u,v \in \mathcal H_T^{s+1,\theta}\hookrightarrow C_b([0,T],H^{s+1})\cap C^1_b([0,T],H^s),
\end{align*}
which solve (aME) on $[0,T] \times \R^2$ in the sense of distributions and such that the initial conditions are satisfied.
\newline\noindent
\textit{\textbf{(Uniqueness)}} If $T>0$ and $(A_0,u,v)$ and $(A_0',u',v')$ are two solutions of (aME) on $(0,T)\times \R^2$ belonging to 
\[
C_b([0,T],\dot H^{r})\times \mathcal H_T^{s+1,\theta}\times \mathcal H_T^{s+1,\theta},
\] 
with the same initial data, then $(A_0,u,v)=(A_0',u',v')$ on $(0,T)\times \R^2$.
\newline\noindent
\textit{\textbf{(Continuous Dependence on Initial Data)}} For any $(u_0,u_1), (v_0,v_1) \in H^{s+1}\times H^s$ there is a neighborhood $U$ of the initial data such that the solution map $(u_0,u_1),(v_0,v_1) \rightarrow (A_0,u,v)$ is continuous from $U$ into $C_b([0,T],\dot H^{r})\times \big(C_b([0,T],H^{s+1})\cap C^1_b([0,T],H^s)\big)^2$.\newline\indent  
In fact, by the results in \cite{SelbergEstimates} combined with estimates for the elliptic equation, we can show these stronger estimates
\be\label{cdimp}
\bs
\|u-&u'\|_{\mathcal H_T^{s+1,\theta}}+\|v-v'\|_{\mathcal H_T^{s+1,\theta}}+\|A_0-A_0'\|_{C_b([0,T],\dot H^{r})}\\
			         &\lesssim \|u_0-u_0'\|_{H^{s+1}}+\|u_1-u_1'\|_{H^{s}} +\|v_0-v_0'\|_{H^{s+1}}+\|v_1-v_1'\|_{H^{s}},
\end{split}
\ee
where $(u_0',u_1'),(v_0',v_1')$ are sufficiently close to $(u_0,u_1),(v_0,v_1)$.
\begin{remark}
Note that below we have no restriction on $s$, i.e., if we \emph{could} show (aME) is LWP in $H^{s+1}\times H^s,$ $s>0$, we would get LWP of (ME) in the Coulomb gauge in $H^s$ for $s>0$ as well. 
\end{remark}
\begin{thm}(\textbf{Return to the Monopole Equation})\label{returnME}
Consider (ME) in the Coulomb gauge with the following initial data in $H^s$ for $s>0$ 
\be\label{id1}
A_i|_{t=0}=a_i,\quad i=1,2 \quad\mbox {and} \mbox\quad \phi|_{t=0}=\phi_0  
\ee
with $\partial^i a_i=0$.
Then local wellposedness of (aME) with initial data as in \eqref{id} implies local wellposedness of (ME) in the Coulomb gauge with initial data given by \eqref{id1}.
\end{thm}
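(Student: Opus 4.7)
The plan is to transfer each of the three LWP properties for (aME) to (ME) in the Coulomb gauge using the change of variables from Remark \ref{iremark}.

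\textbf{Local existence.} Given $(a_1,a_2,\phi_0)\in H^s$ with $\partial^i a_i=0$, first set $h=R_1a_2-R_2a_1\in H^s$ and form the (aME) initial data $(u_0,u_1)=(0,\phi_0+h)$, $(v_0,v_1)=(0,\phi_0-h)$ in $H^{s+1}\times H^s$ (noting that $u_0=0$ trivially lies in $H^{s+1}$ and $u_1,v_1\in H^s$ since Riesz transforms are bounded on $H^s$). Apply LWP of (aME) to obtain $u,v\in\mathcal H_T^{s+1,\theta}$ and $A_0\in C_b([0,T],\dot H^r)$. Define $\phi$ and $\partial_j f$ by \eqref{newphidf}, and set $A=\ast df$; then $\phi,A_1,A_2\in C_b([0,T],H^s)$ with the Coulomb condition $\partial^iA_i=0$ automatic from $A=\ast df$. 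To verify that (ME) holds I would reverse the derivation in the preceding proposition: the system \eqref{system1}-\eqref{system2} for $u,v$ is exactly \eqref{eI}-\eqref{eII}, which by $d$ of \eqref{m1} (modulo the elliptic equation) together with \eqref{m2} is equivalent to (ME) in the Coulomb gauge; the elliptic piece $\triangle A_0=\mathcal{C}$ supplies precisely the $d^\ast$-part of \eqref{m1} that is lost when one only takes $d$. Initial conditions follow from \eqref{id} and \eqref{newphidf} after noting that $h$ inverts the relation between $a$ and its stream function $f(0)$.

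\textbf{Uniqueness.} Suppose $(A,\phi)$ and $(A',\phi')$ are two solutions belonging to $C_b([0,T],\dot H^r)\times(H^{s,\theta}_T)^3$ with common initial data. Since each spatial $A$ is divergence-free, Hodge theory on $\R^2$ gives unique $f,f'$ (normalized by decay) with $A=\ast df$, $A'=\ast df'$, and $\partial_jf,\partial_jf'\in H^{s,\theta}_T$. Define $u,v,u',v'$ by integrating the first-order equations built from \eqref{two} with $u(0)=v(0)=u'(0)=v'(0)=0$. Once one checks these lie in $\mathcal H^{s+1,\theta}_T$, a direct computation shows $(A_0,u,v)$ and $(A_0',u',v')$ both solve (aME) with the (aME)-data prescribed by \eqref{id}, so LWP uniqueness for (aME) forces $(A_0,u,v)=(A_0',u',v')$, and inverting \eqref{newphidf} gives $(A,\phi)=(A',\phi')$.

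\textbf{Continuous dependence.} The map from $(a_1,a_2,\phi_0)\in(H^s)^3$ to the (aME) data $(u_0,u_1,v_0,v_1)\in H^{s+1}\times H^s\times H^{s+1}\times H^s$ is bounded linear. The map $(u,v)\mapsto(\phi,A_1,A_2)$ given by \eqref{newphidf} together with $A=\ast df$ is bounded linear from $(\mathcal H_T^{s+1,\theta})^2$ into $(C_b([0,T],H^s))^3$. Composing these with the strong Lipschitz estimate \eqref{cdimp} for (aME) yields continuity of $(a,\phi_0)\mapsto(A_0,A,\phi)$ into $C_b([0,T],\dot H^r)\times(C_b([0,T],H^s))^3$.

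\textbf{Main obstacle.} The technically delicate point is in the uniqueness step: verifying that $u$ and $v$ reconstructed by solving $(\partial_t\pm iD)u=\phi\pm iDf$ with zero data actually land in $\mathcal H^{s+1,\theta}_T$ when only $\phi,\partial_j f\in H^{s,\theta}_T$ is assumed. The symbols $\tau\pm|\xi|$ interact nontrivially with the weight $\Lambda_-^\theta$, so one does not obviously gain the full extra derivative; handling this may require a small loss in $\theta$ (still satisfying $\theta>\tfrac12$) via the elementary transport estimates for Wave-Sobolev spaces established in \cite{SelbergEstimates}, which is the key check making the equivalence (aME)$\,\Leftrightarrow\,$(ME) in the Coulomb gauge go through at the uniqueness level.
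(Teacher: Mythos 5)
Your existence and continuous-dependence arguments track the paper's proof closely: form the (aME) data, invoke (aME) LWP, pass $u,v$ through \eqref{newphidf}, set $A=\ast df$, and observe the $d^\ast$- and $d$-parts of \eqref{e1'} are supplied by the elliptic equation and \eqref{e1'''} respectively, while \eqref{e2'} is recovered by summing \eqref{system1} and \eqref{system2}. This is exactly the paper's strategy, although the paper actually carries out the coordinate computation (reassembling \eqref{something} and \eqref{somethingelse} from \eqref{anot} and \eqref{difference}), which you would still need to do to close the Hodge-type reconstruction of the $1$-form \eqref{e1'} from its $d$ and $d^\ast$.

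For uniqueness you take a genuinely different route. The paper does not reconstruct $u,v$; it simply invokes the Lipschitz estimate \eqref{cdoid}, derived from the (aME) continuous-dependence estimate \eqref{cdimp}, and notes that with identical data the right-hand side vanishes. Your approach — rebuild $u,v,u',v'$ from the two given (ME) solutions via the half-wave transport equations with zero data, check they lie in $\mathcal H_T^{s+1,\theta}$, and invoke (aME) uniqueness — is more self-contained: it directly addresses the point that an arbitrary (ME) solution in $C_b([0,T],\dot H^r)\times(H^{s,\theta}_T)^3$ is not a priori in the range of the (aME)-to-(ME) map, which is what \eqref{cdoid} implicitly presupposes. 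You correctly flag the delicate step, but the framing can be improved: rather than trying to invert the half-wave factor $(\partial_t\pm iD)$ (whose symbol vanishes on one sheet of the cone, so it cannot gain the $\Lambda_-$ weight globally), observe that the reconstructed $u,v$ satisfy $\square u=\B_+$, $\square v=\B_-$ since $\square=-(\partial_t-iD)(\partial_t+iD)$, and then appeal to the inversion of $\square$ in $\Hs$ from \cite{SelbergEstimates} exactly as in the existence iteration. That replaces your anticipated loss in $\theta$ with the standard Duhamel estimate and makes the equivalence fully rigorous. Either route is acceptable; yours is longer but makes explicit a step the paper treats as obvious.
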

\begin{proof}
Begin by observing that given initial data in the Coulomb gauge, the solutions of (aME) imply $A$ remains in the Coulomb gauge.  Indeed, solutions of (aME) produce $df$ via \eqref{newphidf}, so we get $A=\ast df$, and $d^\ast A=\ast d \ast(\ast df)=0$ as claimed.
\newline\noindent
\textit{\textbf{(Local Existence)}}
From \eqref{newphidf}, if
\[
 u,v \in \mathcal H_T^{s+1,\theta},\quad\mbox{then}\quad \phi, A=\ast df \in H_T^{s,\theta},
\]
as needed.  Next we verify that if $(u,v,A_0)$ solve (aME), then $(\phi,df,A_0)$ solve (ME) in the Coulomb gauge.  Note, the monopole equation in the Coulomb gauge is equivalent to \eqref{e1'} and \eqref{e2'}.   Suppose $u,v,A_0$ solve (aME).  It follows $(df,\phi)$ solve
(\ref{system1}) and (\ref{system2}).  Add (\ref{system1}) to (\ref{system2}) to recover (\ref{eII}), which is equivalent to \eqref{e2'}.\\
\indent
Next given (aME) we need to show \eqref{e1'} holds.  Write \eqref{e1'} in coordinates,
\begin{eqnarray}
\partial_1 A_0 -\partial_2\phi +\partial_t\partial_2f=[\partial_1f,\phi]-[A_0,\partial_2f],\label{something}\\
\partial_2 A_0+\partial_1\phi -\partial_t\partial_2f=[\partial_2f,\phi]+[A_0,\partial_1f].\label{somethingelse}
\end{eqnarray}
From the elliptic equation in (aME) we have 
\be
\begin{split}\label{anot}
A_0=\triangle^{-1}(-\partial_1[A_0,\partial_2f]+\partial_2[A_0,\partial_1f]
									 +\partial_1[\partial_1f,\phi]+\partial_2[\partial_2f,\phi]).
\end{split}
\ee
Also subtract (\ref{system1}) from (\ref{system2}) and multiply by $|\xi|$ on both sides to obtain \eqref{e1'''}, which implies
\be\label{difference}
\begin{split}
\phi-\partial_tf=\triangle^{-1}(\partial_i[A_0,\partial_if]-\partial_2[\partial_1f,\phi]+\partial_1[\partial_2f,\phi]).
\end{split}
\ee
In order to recover \eqref{something}, first use \eqref{anot} to get $\partial_1 A_0$
\be\label{something2}
\partial_1 A_0=\triangle^{-1}(-\partial^2_1[A_0,\partial_2f]+\partial_1\partial_2[A_0,\partial_1f]
							 +\partial^2_1[\partial_1f,\phi]+\partial_1\partial_2[\partial_2f,\phi]).
\ee
Next use \eqref{difference} to get $\partial_2(\phi-\partial_tf)$
\be\label{something3}
\partial_2(\phi-\partial_tf)=\triangle^{-1}(\partial_2\partial_1[A_0,\partial_1f]+\partial_2^2[A_0,\partial_2f]
-\partial_2^2[\partial_1f,\phi]+\partial_2\partial_1[\partial_2f,\phi]),
\ee
and subtract it from \eqref{something2} to get \eqref{something} as needed. 
We recover \eqref{somethingelse} in the exactly same way.
\newline\noindent
\textit{\textbf{(Continuous Dependence on Initial Data)}}
We would like to show
\be\label{cdoid}
\bs
\|A_0-A_0'\|_{C_b([0,T],\dot H^{r})}+&\|A_1-A_1'\|_{H_T^{s,\theta}}+\|A_2-A_2'\|_{H_T^{s,\theta}}+\|\phi-\phi'\|_{H_T^{s,\theta}}\\
															  &\lesssim \|a_1-a_1'\|_{H^s}+\|a_2-a_2'\|_{H^s}+\|\phi_0-\phi'_0\|_{H^s}
\end{split}
\ee
for any $a_1',a_2',\phi_0'$ sufficiently close to $a_1,a_2,\phi_0$.
In view of LWP for (aME) with data given by
\[
u(0)=v(0)=0,\quad \partial_t u(0)=\phi_0 +  h,\qand \partial_t v(0)=\phi_0 -  h,\quad h=R_1 a_2-R_2 a_1,
\]
and by \eqref{cdimp} we have
\be\label{cdimp1}
\bs
\|u-&u'\|_{\mathcal H_T^{s+1,\theta}}+\|v-v'\|_{\mathcal H_T^{s+1,\theta}}+\|A_0-A_0'\|_{C_b([0,T],\dot H^{r})}\\
			         &\lesssim \|u_0'\|_{H^{s+1}}+\|\phi_0 +  h-u_1'\|_{H^{s}} +\|v_0'\|_{H^{s+1}}+\|\phi_0 -  h-v_1'\|_{H^{s}},
\end{split}
\ee
for all $u_0',v_0',u_1',v_1'$ satisfying
\be\label{cdimp2}
 \|u_0'\|_{H^{s+1}}+\|\phi_0 +  h-u_1'\|_{H^{s}} +\|v_0'\|_{H^{s+1}}+\|\phi_0 -  h-v_1'\|_{H^{s}}\leq\delta,
\ee
for some $\delta >0$.  In particular choose 
\be\label{cdimp3}
u_0'=v_0'=0,\quad u_1'=\phi_0'+h' \qand v_1'=\phi_0'-h',\quad h'=R_1 a_2'-R_2 a_1', 
\ee
such that
\be\label{cdimp4}
\bs
 \|\phi_0+h-\phi_0'-h'\|_{H^s}& + \|\phi_0-h-\phi_0'+h'\|_{H^s}\\
                &\quad \lesssim \|\phi_0-\phi_0'\|_{H^s} + \|R_1(a_2-a_2')\|_{H^s}+\|R_2(a_1-a_1')\|_{H^s}\\
		&\quad\leq \|\phi_0-\phi'_0\|_{H^s}+\|a_1-a_1'\|_{H^s}+\|a_2-a_2'\|_{H^s}\\
		&\quad \leq \delta.
\end{split}
\ee
Then by \eqref{cdimp1}-\eqref{cdimp4}, $\|A_0-A_0'\|_{C_b([0,T],\dot H^{r})}$ is bounded by the right hand side of \eqref{cdoid}.  Next observe
\begin{align*}
\|A_1-A_1'\|_{H_T^{s,\theta}} &\lesssim\|R_2(\partial_t+iD)(u-u')\|_{H_T^{s,\theta}}+\|R_2(\partial_t-iD)(v-v')\|_{H_T^{s,\theta}}\\
                & \leq \|u-u'\|_{\mathcal H_T^{s+1,\theta}} +  \|v-v'\|_{\mathcal H_T^{s+1,\theta}}.
\end{align*}
So again by \eqref{cdimp1}-\eqref{cdimp4} $\|A_1-A_1'\|_{H_T^{s,\theta}}$ is bounded by the right hand side of \eqref{cdoid}.  We bound the difference for $A_2$ and $\phi$ in a similar fashion.
\newline\noindent
\textit{\textbf{(Uniqueness)}}
By LWP of (aME), $A_0$ is unique in the required class.  We need to show $A$ and $\phi$ are unique in $H^{s,\theta}_T$.  However, by \eqref{cdoid} this is obvious.
\end{proof}

\begin{section}{Proof of the Main Theorem}\label{main}
By Theorem \ref{returnME} it is enough to show LWP for (aME).  We start by explaining how we are going to perform our iteration.
\begin{subsection}{{Set up of the Iteration}}
Equations (aME) are written for functions $u$ and $v$.  Nevertheless, functions $u$ and $v$ are only our auxiliary functions, and we are really interested in solving for $df$ and $\phi$. In addition, the nonlinearities $\mathcal B_\pm$ are a linear combination of $\mathcal B_i$'s, $i=1,2,3,4$ given by \eqref{bis}, and $\mathcal B_i$'s are written in terms of $\phi, df$ and $A_0$.  Also, when we do our estimates, it is easier to keep the $\B_i$'s in terms of $\phi$ and $df$ with the exception of $\B_2$, which we rewrite in terms of $\partial u$ and $\partial v$\footnote{See Section \ref{nullformq} for the details.}.  These comments motivate the following procedure for our iteration.  Start with $\phi_{-1}=df_{-1}=0$.  Then $\B_\pm\equiv 0$.  Solve the homogeneous wave equations for $u_0,v_0$ with the initial data given by \eqref{id}.  Then to solve for $df_0, \phi_0$, use \eqref{newphidf}.
Then feed $\phi_0$ and $df_0$ into the elliptic equation, 
\be
\triangle A_{0,0}=d^\ast([A_{0,0}, \ast df_0]+ [df_0, \phi_0]),
\ee
and solve for $A_{0,0}$.  Next we take $df_0, \phi_0$ and $A_{0,0}$ plug them into $\mathcal B_1, \mathcal B_3, \mathcal B_4$,  but rewrite $\mathcal B_2$ in terms of $\partial u_0, \partial v_0$.  We continue in this manner, so at the j'th step of the iteration, $j \geq 1$, we solve
\be\nonumber
\begin{split}
\square u_j&= -\B_1(\nabla f_{j-1})+ \B_2(\partial u_{j-1}, \partial v_{j-1}) +\B_3(A_{0,j-1},\phi_{j-1})+\B_4(A_{0,j-1},\nabla f_{j-1}),\\
\square v_j&= -\B_1(\nabla f_{j-1})-\B_2(\partial u_{j-1}, \partial v_{j-1})+\B_3(A_{0,j-1},\phi_{j-1})-\B_4(A_{0,j-1},\nabla f_{j-1}),\\
\triangle A_{0,j}&=d^\ast([A_{0,j}, \ast df_j]+ [df_j, \phi_j]).
\end{split}
\ee
\end{subsection}
\begin{subsection}{Estimates Needed}
The elliptic equation is discussed in Section \ref{ellipticpiece1}.  Therefore we begin by discussing the inversion of the wave operator in $\Hs$ spaces.  The main idea is that for the purposes of local in time estimates $\square^{-1}$ can be replaced with $\Lambda^{-1}_+\Lambda^{-1}_-$.  The first estimates, leading to wellposedness for small initial data, were proved by Klainerman and Machedon in \cite{KlainermanMachedon95}.  The small data assumption was removed by Selberg in \cite{SelbergEstimates}, where he showed that by introducing $\epsilon$ small enough in the invertible version of the wave operator, i.e., $\Lambda^{-1}_+\Lambda^{-1+\epsilon}_-$, we can use initial data as large as we wish\footnote{See also \cite{KS} Section 5 for an excellent discussion and motivation of the issues involved in the Picard iteration.}.  In \cite{SelbergEstimates} Selberg also gave a very useful, general framework for local wellposedness of wave equations, which reduces the proof of the Main Theorem to establishing the estimates below, for the nonlinearities $\mathcal B_\pm$, and to combining them with appropriate elliptic estimates from Section \ref{ellipticpiece1}.  The needed estimates for $\mathcal B_\pm$ are
\be
\|\Lambda^{-1}_+\Lambda^{-1+\epsilon}_-\mathcal B_\pm(\partial u, \partial v,A_{0})\|_\Hs \lesssim
										 																														\|u\|_\Hs + \|v\|_\Hs,\label{bound}
\ee										 																													
\be\label{lip}
\begin{split}
\|\Lambda^{-1}_+\Lambda^{-1+\epsilon}_- \bigl(\mathcal B_\pm(\partial u,\partial v,A_0)-\mathcal B_\pm(\partial u',&\partial v',A_0')\bigr)\|_\Hs \\
 																																																				&\lesssim \|u-u'\|_\Hs+\|v-v'\|_\Hs,
\end{split}
\ee
where the suppressed constants depend continuously on the $\Hs$ norms of $u,u',v,v'$.  
Since $\mathcal B_\pm$ are bilinear, (\ref{lip}) can follow from (\ref{bound}).  
In this paper small initial data is necessary\footnote{See Theorem \ref{coulombg} and Section \ref{ellipticpiece1}.}, so we do not need $\epsilon$, but we keep it to make the estimates general.  Let $\frac{1}{4}<s<\frac{1}{2}$ and set $\theta,\epsilon$ as follows
\begin{align*}
\frac{3}{4}-\frac{\epsilon}{2}<\theta\leq s+\frac{1}{2}-\epsilon,\qand \theta<1-\epsilon,\quad
0\leq \epsilon<\min\left( 2s-\frac{1}{2},\frac{1}{2}\right).
\end{align*}
Next
observe $\Lambda_+\Lambda_-^{1-\epsilon} \Hs=H^{s,\theta-1+\epsilon}$, as well as that
\[
\|\nabla f\|_\h, \|\phi\|_\h \lesssim \|u\|_\Hs + \|v\|_\Hs.
\]
Therefore, using \eqref{Bpm} and \eqref{bis}, it is enough to prove the following
\begin{align}
\|\mathcal B_1\|_{H^{s,\theta-1+\epsilon}}&=\|[\partial_1f,\partial_2f]\|_{H^{s,\theta-1+\epsilon}} \lesssim \|\nabla f\|_\h^2\label{M1}, \\
\|\mathcal B_2\|_{H^{s,\theta-1+\epsilon}}&\lesssim\|[\partial_jf,\phi]\|_{H^{s,\theta-1+\epsilon}} \lesssim \|\partial_j f\|_\h\|\phi\|_\h,\quad j=1,2, \label{M3}  \\
\|\mathcal B_3\|_{H^{s,\theta-1+\epsilon}}&\lesssim\|A_0\phi\|_{H^{s,\theta-1+\epsilon}} \lesssim \|A_0\| \|\phi\|_\h \label{M2},  \\
\|\mathcal B_4\|_{H^{s,\theta-1+\epsilon}}&\lesssim\|A_0 \partial_jf\|_{H^{s,\theta-1+\epsilon}} \lesssim \|A_{0}\| \|\partial_j f\|_\h, \quad j=1,2,\label{M4} 
\end{align}
where the norm we are using for $A_0$ is immaterial, mainly because we show in Section \ref{ellipticpiece1}, 
\be
\|A_0\|\lesssim \|\nabla f\|_\h\|\phi\|_\h\label{ell}.
\ee
A few remarks are in order.  Estimate \eqref{M1} corresponds to estimates for the null form $Q_{ij}$, and estimate \eqref{M3} gives rise to a new null form $Q$ (this is discussed in the next two sections).  $A_0$ in estimates \eqref{M2} and \eqref{M4} solves the elliptic equation in (aME), which results in a quite good regularity for $A_0$.  As a result, we do not have to look for any special structures to get \eqref{M2} and \eqref{M4} to hold, so we can drop the brackets, and also treat these estimates as equivalent since $\phi$ and $df$ exhibit the same regularity.  Finally, since Riesz transforms are clearly bounded on $L^2$, we ignore them in the estimates needed in \eqref{M3} and \eqref{M4}. The estimates \eqref{M1} and \eqref{M3} for the null forms are the most interesting.  Hence we discuss them first, and then we consider the elliptic terms.
 
\begin{subsubsection}{Null Forms--Proof of Estimate (\ref{M1})}\label{nullformqij}
$[\partial_1f,\partial_2f]$ has a structure of a null form $Q_{ij}:$
\[ 
[\partial_1f, \partial_2f]=\partial_1f\partial_2f-\partial_2f\partial_1f=Q_{12}(f,f).
\]
It follows \eqref{M1} is equivalent to
\[
\|Q_{12}(f,f)\|_{H^{s,\theta-1+\epsilon}} \lesssim \|\nabla f\|_\h^2.
\]
Fortunately the hard work for null forms of type $Q_{\alpha,\beta}$ in two dimensions is already carried out by Zhou in \cite{Zhou}.  His proof is done using spaces $N^{s+1,\theta}$ with the norm given by\footnote{see \cite{SelbergT}  Section 3.5 for a comparison with $\Hs$ spaces.}
\be
\|u\|_{N^{s+1,\theta}}=\|\Lambda_+^{s+1}\Lambda^\theta_-u\|_{L^2}\label{xsth}.
\ee
In his work $\theta=s+\frac{1}{2}$.  We state Zhou's result. 
\begin{theorem*} (\cite{Zhou})
Consider in $\R^{2+1}$ the space-time norms \eqref{xsth} and functions $\varphi, \psi$ defined on $\R^{2+1}$, the estimates
\[
\|Q_{\alpha\beta}(\varphi,\psi)\|_{N^{s,s-\frac{1}{2}}}\lesssim \|\varphi\|_{N^{s+1,s+\frac{1}{2}}}\|\psi\|_{N^{s+1,s+\frac{1}{2}}}
\]
hold for any $\frac{1}{4} < s < \frac{1}{2}$.
\end{theorem*}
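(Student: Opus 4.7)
The plan is to exploit the null structure of $Q_{\alpha\beta}$ to extract a half-order weight gain from the Fourier symbol, then reduce the bound to bilinear Strichartz-type estimates in two spatial dimensions.

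First I would pass to Fourier space and dualize. By Plancherel the desired estimate is equivalent to a trilinear inequality
\[
\left|\iint \hat{\varphi}(\tau,\xi)\,\hat{\psi}(\lambda,\eta)\,\overline{\hat{w}(\mu,\zeta)}\,(\xi_\alpha\eta_\beta-\xi_\beta\eta_\alpha)\,d\tau\, d\lambda\, d\xi\, d\eta\right|\lesssim\|\varphi\|\,\|\psi\|\,\|w\|,
\]
where $(\mu,\zeta)=(\tau+\lambda,\xi+\eta)$ and $w$ lives in the dual of $N^{s,s-1/2}$. The symbol $|\xi_\alpha\eta_\beta-\xi_\beta\eta_\alpha|$ equals $|\xi|\,|\eta|\,|\sin\angle(\xi,\eta)|$, so it vanishes precisely when the two spatial frequencies are parallel, which is exactly where the worst product resonances of a naive iteration occur.

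Second, I would invoke the Klainerman-Machedon angle-gain. Decomposing $\varphi,\psi,w$ into their $\pm$ spectral pieces (so that $\tau\approx\pm|\xi|$, etc.) and writing $w_1,w_2,w_3$ for the distances of the three input/output triples from the light cone, one has a symbol bound of the schematic form
\[
|\xi\wedge\eta|\lesssim(|\xi|\,|\eta|)^{1/2}\bigl(|\xi|+|\eta|+|\zeta|\bigr)^{1/2}\bigl(w_1+w_2+w_3\bigr)^{1/2}.
\]
This trades one full spatial derivative of the null form for a half-derivative of order $|\xi|+|\eta|+|\zeta|$ together with the square root of one of the cone-distance weights. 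The estimate then splits into three sub-cases according to which $w_j$ dominates. When $w_3$ dominates, the factor $w_3^{1/2}$ is absorbed into the $\Lambda_-^{s-1/2}$ weight on $w$, reducing the inequality to a product bound in $L^2_{t,x}$ accessible through \eqref{D} and \eqref{E}. When $w_1$ (or symmetrically $w_2$) dominates, $w_1^{1/2}$ is absorbed into the $\Lambda_-^{s+1/2}$ weight on $\varphi$, lowering its cone regularity by one half; the remaining bilinear inequality can be handled through the Klainerman-Tataru estimate \eqref{A} together with \eqref{E}, after a standard dyadic decomposition in spatial frequency and the choice of admissible exponents.

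The principal obstacle is sharpness in two spatial dimensions. Unlike in dimensions $n\geq 3$, the $L^4_{t,x}$ Strichartz estimate for free waves is only logarithmic and the bilinear cone-restriction theorem degenerates, so the total angle-gain of one half, distributed across the three factors, provides just enough room to close the argument for $s>\tfrac14$ and no further. The most delicate regime is the high-high interaction in which $|\xi|,|\eta|,|\zeta|$ are comparable and all three $w_j$ can be simultaneously small, forcing a joint dyadic decomposition in frequency and in cone-distance together with a careful summation that just avoids a logarithmic blow-up at $s=\tfrac14$. This threshold is precisely the Zhou limit recorded in the statement, and is the reason the estimate cannot be pushed lower by iteration methods.
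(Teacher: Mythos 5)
The paper does not prove this statement: it is Zhou's theorem, cited verbatim from \cite{Zhou}, and the text immediately following remarks only that ``inspection of Zhou's proof shows that it could be easily modified to be placed in the context of $\mathcal H^{s,\theta}$ spaces,'' referring to \cite{Czubak} for an alternative argument built on Theorem~F of \cite{KS}. So there is no in-paper proof to compare against; I can only assess your sketch on its own terms. Your outline --- dualize, split into $\pm$ spectral pieces, extract a half-power angular gain from the symbol, and distribute the cone-distance weights $w_1,w_2,w_3$ --- is indeed the Klainerman--Machedon template that Zhou's argument fits into, so the plan is in the right spirit.

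There are, however, concrete gaps. First, the identity $|\xi_\alpha\eta_\beta-\xi_\beta\eta_\alpha|=|\xi|\,|\eta|\,|\sin\angle(\xi,\eta)|$ holds only when $\alpha,\beta$ are both spatial; the theorem also covers $Q_{0j}$, whose symbol $\tau\eta_j-\lambda\xi_j$ must first be converted to an angular quantity by substituting $\tau\approx\pm|\xi|$, $\lambda\approx\pm|\eta|$ and controlling the error terms $O(w_1|\eta|+w_2|\xi|)$ --- a separate case you do not address. Second, the bound $|\xi\wedge\eta|\lesssim(|\xi||\eta|)^{1/2}(|\xi|+|\eta|+|\zeta|)^{1/2}(w_1+w_2+w_3)^{1/2}$ is what you get in the $(+,+)\to(+)$ configuration; in a mixed $(+,-)$ configuration the two nearly parallel frequencies are $\zeta$ and $\eta$ (say), and the correct factor is $(|\zeta||\eta|)^{1/2}$, which changes the power counting in high-low interactions with $|\xi|\gg|\eta|$. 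Third, and most importantly, the phrase that the remaining bilinear inequalities ``can be handled through the Klainerman--Tataru estimate \eqref{A} together with \eqref{E}'' is precisely where the $s>\tfrac14$ threshold must be earned. Note that $L^4_{t,x}$ is not even an admissible pair for estimate \eqref{D} in two dimensions (the condition $\tfrac2p\le\tfrac12-\tfrac1q$ fails at $p=q=4$), so the high-high regime with all $w_j$ small cannot be disposed of by a naive $L^4\times L^4\to L^2$ argument; it requires the frequency- and modulation-localized summation that constitutes the bulk of Zhou's paper. As written, the hard part of the proof has been asserted, not carried out.
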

Our iteration is done using spaces $\Hs$.  Inspection of Zhou's proof shows that it could be easily modified to be placed in the context of $\Hs$ spaces.  However, even though our auxiliary functions' iterates $u_j$ and $v_j$ belong to $\Hs$, from \eqref{newphidf} we only have
\be
df \in H^{s,\theta} \Rightarrow \|\Lambda^s\Lambda^\theta_-Df\|_{L^2(\R^{2+1})} < \infty,\label{df}
\ee
but again inspection of Zhou's proof shows we can still handle $Q_{12}(f,f)$ given only that \eqref{df} holds.  Moreover, Zhou's proof works for $\frac{1}{4}<s<\frac{1}{2}$, but studying of his proof motivated an alternate proof that uses $\Hs$ and works for all values of $s>\frac{1}{4}$.  The proof is closely related to the original proof in \cite{Zhou}, but on the surface it seems more concise.  The reason for this is that we use Theorem F from \cite{KS}, which involves all the technicalities.  See \cite{Czubak} for the details. 
\end{subsubsection}
\begin{subsubsection}{Null Forms--Proof of Estimate (\ref{M3})}\label{nullformq}
We need
\[
\|[\partial_j f,\phi]\|_{H^{s,\theta-1+\epsilon}} \lesssim \|\partial_j f\|_\h\|\phi\|_\h, \quad j=1,2. 
\]
However analysis of the first iterate shows that for this estimate to hold we need $s > \frac{3}{4}$, so we need to work a little bit harder, and use \eqref{newphidf}\footnote{The obvious way is to just substitute for $\phi$ and leave $\partial_jf$ the same, but it is an exercise to see that this does not work (for several reasons!).}
\be\label{dfphibracket}
[\partial_j f, \phi]  =\frac{1}{4}[R_j (\partial_t u + iDu -\partial_t v + iDv),\partial_t u + iDu +\partial_t v - iDv].
  										\ee
If we use the bilinearity of the bracket, we can group \eqref{dfphibracket} by terms involving brackets of $u$ with itself, $v$ with itself, and then also by the terms that are mixed i.e., involve both $u$ and $v$.  So we have
\begin{align*}
4[\partial_j f, \phi]&=[R_j(\partial_t + iD)u,(\partial_t + iD)u]-[R_j(\partial_t - iD)v,(\partial_t - iD)v]\\
										&\quad +[R_j(\partial_t +iD)u,(\partial_t - iD)v]-[R_j(\partial_t - iD)v,(\partial_t + iD )u]. 
\end{align*}
Since $u$ and $v$ are matrix valued and do not commute we need to combine the last two brackets to take advantage of a null form structure.  This corresponds to \eqref{newnullformplus2} below (note the plus sign in the formula).\newline\indent
The needed estimates are contained in the following theorem
\begin{thm} Let $s>\frac{1}{4}$ and
\begin{align*}
&\frac{3}{4}-\frac{\epsilon}{2}<\theta\leq s+\frac{1}{2},\qand \theta< 1-\epsilon,\\
&0\leq \epsilon<\min\left( 2s-\frac{1}{2},\frac{1}{2}\right).
\end{align*}
and let $Q(\varphi,\psi)$ be given by
\begin{align}
Q(\varphi,\psi)&=(\partial_t \pm iD)R_j\varphi(\partial_t \pm iD)\psi - (\partial_t \pm iD)\varphi (\partial_t\pm iD)R_j\psi\\
\mbox{or}&\nonumber\\
Q(\varphi,\psi)&=(\partial_t \pm iD)R_j\varphi(\partial_t \mp iD)\psi + (\partial_t \pm iD)\varphi (\partial_t\mp iD)R_j\psi.\label{newnullformplus2}
\end{align}
Then
\be
Q(\Hs,\Hs) \hookrightarrow H^{s,\theta-1+\epsilon} 
\ee
or equivalently, the following estimate holds
\be\label{nullformest}
\|Q(\varphi,\psi)\|_{H^{s,\theta-1+\epsilon}} \lesssim \|\varphi\|_\Hs\|\psi\|_\Hs. 
\ee
\end{thm}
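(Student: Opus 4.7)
My plan is to expose the null structure of $Q$ on the Fourier side, extract a $\Lambda_-^{1/2}$-type gain, and then reduce \eqref{nullformest} to the bilinear $L^2_{t,x}$ estimate \eqref{E}, supplemented at the endpoints by \eqref{A} and \eqref{D}, paralleling the treatment of $Q_{\alpha\beta}$ just discussed.

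A direct symbol computation using $(\partial_t\pm iD)\leftrightarrow i(\tau\pm|\xi|)$ and $R_j\leftrightarrow i\xi_j/|\xi|$ shows that the two versions of $Q(\varphi,\psi)$ have convolution symbols
\[
-i(\tau_1\pm|\xi_1|)(\tau_2\pm|\xi_2|)\left(\frac{\xi_{1,j}}{|\xi_1|}-\frac{\xi_{2,j}}{|\xi_2|}\right) \qand -i(\tau_1\pm|\xi_1|)(\tau_2\mp|\xi_2|)\left(\frac{\xi_{1,j}}{|\xi_1|}+\frac{\xi_{2,j}}{|\xi_2|}\right),
\]
with $\xi=\xi_1+\xi_2$; in both cases the symbol factors as wave prefactors times the angular null symbol $\bigl|\frac{\xi_1}{|\xi_1|}\mp\frac{\xi_2}{|\xi_2|}\bigr|$. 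Combining the identity
\[
\left|\frac{\xi_1}{|\xi_1|}\mp\frac{\xi_2}{|\xi_2|}\right|^{2} = \frac{(|\xi_1|+|\xi_2|)^{2}-|\xi_1\pm\xi_2|^{2}}{|\xi_1||\xi_2|}
\]
with the standard sign-matched bound
\[
\bigl|\,|\xi_1|+|\xi_2|\mp|\xi_1\pm\xi_2|\,\bigr| \lesssim \bigl\langle\tau_1\mp_1|\xi_1|\bigr\rangle+\bigl\langle\tau_2\mp_2|\xi_2|\bigr\rangle+\bigl\langle|\tau|-|\xi|\bigr\rangle,
\]
where $\mp_k$ is the sign opposite to the one appearing in the $k$th wave prefactor, trades the null symbol for a $\Lambda_-^{1/2}$ weight that can be freely distributed across the two input cones and the output cone, at the cost of a factor $(|\xi_1||\xi_2|)^{-1/2}$. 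This denominator is absorbed against the wave prefactors (each pointwise bounded by $\Lambda_+$), so that $Q(\varphi,\psi)$ is rewritten as a bilinear expression in $\Lambda_+^{1/2}\Lambda_-^{-a}\varphi$ and $\Lambda_+^{1/2}\Lambda_-^{-b}\psi$ with a residual output weight $\Lambda_-^{c}$ satisfying $a+b+c=\tfrac{1}{2}$.

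After this rearrangement, \eqref{nullformest} reduces to a product estimate of the form $\|uv\|_{H^{s,\theta-1+\epsilon}}\lesssim\|u\|_{H^{a,\alpha}}\|v\|_{H^{b,\beta}}$, and the $(s,\theta,\epsilon)$-window stipulated in the theorem is precisely arranged so that the hypotheses $a+b>1$ and $\alpha+\beta>\tfrac{1}{2}$ of \eqref{E} hold, with room to spare, for each of the three possible placements of the $\Lambda_-^{1/2}$ gain. In the regime $\theta-1+\epsilon<0$ I would dualize against $H^{-s,1-\theta-\epsilon}$ and read the result as a trilinear integral; the various frequency regimes (high-high-to-high, high-high-to-low, high-low-to-high) are then handled by \eqref{E} or, near the scaling endpoint, by \eqref{A} or \eqref{D}.

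The main obstacle will be the output-resonant regime: high-high inputs with nearly parallel spatial frequencies producing a low-frequency output. There the null gain is forced almost entirely onto the output $\Lambda_-$ weight while the denominator $(|\xi_1||\xi_2|)^{-1/2}$ eats into the few spatial derivatives available, and a careful dyadic decomposition in the angle between $\xi_1$ and $\xi_2$ against the three wave-cone distances is required. The analysis is sharp precisely at $s=\tfrac{1}{4}$, mirroring the obstruction for $Q_{\alpha\beta}$. Once the estimate is proved for one sign combination, the others follow by interchanging the roles of the two input cones, and bilinearity of $Q$ upgrades the bound to the Lipschitz-in-data version needed for the Picard iteration.
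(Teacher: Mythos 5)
Your proposal takes a genuinely different route from the paper's, and as stated it has a gap. The paper does \emph{not} uniformly extract the angular null gain. Its starting observation is that the two wave prefactors in the symbol, $(\tau+|\xi|)$ and $(\lambda-|\eta|)$, carry \emph{opposite} signs, so when $\tau\lambda\geq 0$ one of them is already of the form $\big||\tau|-|\xi|\big|$ — a full $D_-$ weight, for free, with no need to touch the angular factor at all. The estimate then reduces, via Theorem \ref{dudv}, to $\|D_+\varphi\,D_-\psi\|_{H^{s,\theta-1+\epsilon}}\lesssim\|\varphi\|_\Hs\|\psi\|_\Hs$, handled directly by the Klainerman--Selberg product embedding \eqref{E}. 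Only for $\tau\lambda<0$ does the paper split further: in the high-modulation region $|\tau|\geq 2|\xi|$ or $|\lambda|\geq 2|\eta|$ no null structure is needed; in the low-modulation region the wave prefactors are comparable to $|\xi|,|\eta|$ and the symbol is essentially $Q_{tj}$, treated by Zhou-style arguments. Your approach tries instead to trade the angular factor for a $\Lambda_-^{1/2}$ weight in all regimes. That can be made to work, but your absorption step is not correct as written: you claim $(|\xi_1||\xi_2|)^{-1/2}$ is absorbed against the wave prefactors, ``each pointwise bounded by $\Lambda_+$,'' leaving $\Lambda_+^{1/2}$ factors. When $|\tau_1|\gg|\xi_1|$ this fails — $(\tau_1\pm|\xi_1|)|\xi_1|^{-1/2}\sim|\tau_1|\,|\xi_1|^{-1/2}$ is not $\lesssim\Lambda_+^{1/2}\sim|\tau_1|^{1/2}$. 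You need to spend $\Lambda_-^\theta$ weight there (since $\Lambda_-\sim|\tau_1|$ in that region), which is a separate argument; this is exactly the paper's region~\eqref{regions}, first case, and your proposal never isolates it.

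Relatedly, the claim that the $\Lambda_-^{1/2}$ weight ``can be freely distributed across the two input cones and the output cone'' is too loose. The quantities $\langle\tau_k\mp_k|\xi_k|\rangle$ in your sign-matched bound coincide with the input $\Lambda_-$ weights only when $\tau_k$ has the sign matching that convention; in the opposite sign regime they are $\Lambda_+$-type and the compensating smallness instead sits in the wave prefactor $(\tau_k\pm_k|\xi_k|)$. So the trade-off is sign-dependent, and once you account for it carefully you will effectively have reproduced the paper's sign-based case analysis — but with less transparent bookkeeping, and getting only a half-order $\Lambda_-$ gain where the paper's case $\tau\lambda\geq 0$ already gives a full order. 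The paper's observation about the opposite-signed wave prefactors is the cleaner mechanism, and is what makes Theorem~\ref{dudv} the right reduction.
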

\begin{proof}
We show the details only for 
\[
(\partial_t + iD)R_j\varphi(\partial_t - iD)\psi + (\partial_t + iD)\varphi (\partial_t- iD)R_j\psi
\]
as the rest follows similarly.  Observe the symbol of $Q$ is
\[
q(\tau,\xi,\lambda,\eta)=\big(\frac{\xi_j}{|\xi|}+\frac{\eta_j}{|\eta|}\big)(\tau +|\xi|)(\lambda-|\eta|).
\]
Suppose $\tau \lambda \geq 0$, then
\[
q\leq 2\big|(\tau +|\xi|)(\lambda-|\eta|)\big|\leq \left\{\begin{array}{l} 2\big||\tau|+|\xi|\big|\big||\lambda|-|\eta|\big|\quad\mbox {if}\mbox\quad \tau, \lambda \geq 0,\\
																											 2\big||\tau|-|\xi|\big|\big||\eta| +|\lambda| \big|\quad\mbox {if}\mbox\quad \tau,\lambda\leq 0.	 \end{array}\right.
\]
It follows
\be\label{someestimate}
\iint_{\tau\lambda\geq 0}|\Lambda^s\Lambda_-^{\theta-1+\epsilon}Q(\varphi,\psi)|^2d\tau d\xi \lesssim \| D_+\varphi D_-\psi\|^2_{H^{s,\theta-1+\epsilon}}  +\|D_-\varphi D_+\psi\|^2_{H^{s,\theta-1+\epsilon}}  
\ee
and the estimate follows by Theorem \ref{dudv} below.\newline\indent
Suppose $\tau \lambda < 0$.  If we break down the computations into two regions
\be
\{ (\tau,\xi),(\lambda,\eta) : |\tau| \geq 2|\xi| \quad\mbox{or}\quad  |\lambda| \geq 2 |\eta|\} \quad\mbox {and} \quad \mbox {otherwise},\label{regions} 
\ee
then in the first region, we bound $q$ by
\[
q\leq 2(|\tau| +|\xi|)(|\lambda|+|\eta|)
\]
since there we do not need any special structure\footnote{It is a simple exercise in the first region.  See Appendix B in \cite{Czubak}.}.\newline\indent
In the second region, we have
\[
q\leq 4 |\xi||\eta|\left|\frac{\xi_i}{|\xi|}+\frac{\eta_i}{|\eta|}\right|,
\]
which is the absolute value of the symbol of the null form $Q_{tj}$ in the first iterate.  It has received a lot of attention, but we have not seen a reference, where it was discussed in the context other than of the initial data in $H^{s+1}\times H^{s}.$  This may be, because it has not come up as a nonlinearity before, and/or because it can be handled in the same way as the null form $Q_{ij}$.  The details are in \cite{Czubak}.
\end{proof}
Now we prove an estimate needed to show \eqref{someestimate} is bounded by the square of the right hand side of \eqref{nullformest}.
\begin{thm}\label{dudv}
Let $s>0$ and 
\begin{align*}
&\max\left(\frac{1}{2},1-s\right)<\theta<1,\\
&0\leq \epsilon\leq 1-\theta,
\end{align*}
then
\[
\|D_+\varphi D_-\psi\|_{H^{s,\theta-1+\epsilon}}\lesssim\|\varphi\|_\Hs\|\psi\|_\Hs
\]
\end{thm}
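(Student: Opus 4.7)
The plan is to prove the bound by duality together with a redistribution of the modulation weights. Set $\mu = \theta-1+\epsilon \leq 0$ so that
\[
\|D_+\varphi\,D_-\psi\|_{H^{s,\mu}} = \sup_{\|w\|_{H^{-s,-\mu}}=1}\Bigl|\iint D_+\varphi \cdot D_-\psi \cdot w \, dx\,dt\Bigr|,
\]
and observe that the test function lies in $H^{-s,1-\theta-\epsilon}$, whose modulation weight $1-\theta-\epsilon$ is nonnegative by the hypothesis $\epsilon \leq 1-\theta$. After this reformulation both $\varphi$ (via its $\Lambda_-^\theta$ weight) and $w$ carry positive modulation weights, while the only "bad" factor is the one coming from $D_-\psi$.

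The core step is to move the $D_-$ off of $\psi$ and onto the other two factors. On the trilinear Fourier side (with $(\tau_i,\xi_i)$ summing to zero) I would use the modulation triangle inequality
\[
\bigl||\tau_2|-|\xi_2|\bigr| \lesssim \bigl||\tau_1|-|\xi_1|\bigr| + \bigl||\tau_3|-|\xi_3|\bigr| + \min(|\xi_1|,|\xi_3|)\bigl(1+\text{angle}(\xi_1,\xi_3)\bigr),
\]
valid on the support of the convolution. This splits the integral into three pieces: in the first two the extra factor of $\Lambda_-$ is absorbed into the $\Lambda_-^\theta$ weight on $\varphi$ or the $\Lambda_-^{1-\theta-\epsilon}$ weight on $w$ (both of which have room because $\theta<1$ and $1-\theta-\epsilon \leq 1-\theta$), while the third piece produces only spatial derivatives on the smallest frequency. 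In every piece I would then apply Hölder's inequality in mixed $L^p_t L^q_x$ norms and control each factor by \eqref{D}, with any leftover $L^2_{t,x}$ product bounded by \eqref{E}; the admissibility condition $\theta>\tfrac12$ for \eqref{D} holds throughout, and the spatial indices sum to $s$ exactly when $\theta>1-s$, since the modulation-to-derivative trade in the "angle" piece costs $1-\theta$ spatial derivatives that must be paid for by the $s$ derivatives available on $\varphi$ and $\psi$.

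The main obstacle is keeping track of the angle term: it is the reason one cannot simply substitute $\tilde\varphi = D_+\varphi \in H^{s,\theta}$ and $\tilde\psi = D_-\psi \in H^{s+1,\theta-1}$ and appeal to \eqref{E} directly, since the modulation index $\theta-1$ on $\tilde\psi$ violates the nonnegativity hypothesis of that proposition. The duality/redistribution scheme above is what sidesteps this, and the hypotheses $s>0$ and $\theta>\max(\tfrac12,1-s)$ are the precise conditions under which the admissibility of both \eqref{D} and \eqref{E} in each of the three cases closes. The low-frequency discrepancies between $D_\pm$ and $\Lambda_\pm$ are handled separately by the Sobolev embedding into $L^p$ (for $p$ large), which is routine and requires no additional hypothesis.
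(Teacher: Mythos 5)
Your diagnosis of the obstruction is correct — one cannot feed $D_-\psi\in\mathcal H^{s+1,\theta-1}$ directly into estimate~\eqref{E}, because that proposition requires nonnegative modulation exponents. But the fix in the paper is far lighter than what you propose, and it involves \emph{no} angle analysis or duality at all. The key observation you are missing is the embedding
\[
\mathcal H^{s+1,\theta-1}\hookrightarrow H^{s+\theta,0},
\]
which holds because $\Lambda_-\lesssim\Lambda_+$, so for $\theta-1<0$ one has $\Lambda_+\Lambda_-^{\theta-1}\gtrsim\Lambda_+^{\theta}\gtrsim\Lambda^{\theta}$. In words: the surplus $\Lambda_+$ weight and the negative modulation weight on $D_-\psi$ can be traded for $\theta$ extra pure spatial derivatives at zero modulation. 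After also weakening the target to $L^2$ (legitimate since $\theta-1+\epsilon\le 0$) and applying the product rule for $\Lambda^s$, the estimate reduces to the two embeddings $H^{0,\theta}\cdot H^{s+\theta,0}\hookrightarrow L^2$ and $H^{s,\theta}\cdot H^{\theta,0}\hookrightarrow L^2$, which are exactly of the form covered by~\eqref{E} (all indices $\ge 0$, $\alpha+\beta=\theta>\tfrac12$, $a+b=s+\theta>1$). That last condition is precisely where $\theta>1-s$ enters — the same numerology you found, but arrived at by a direct substitution.

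By contrast, your proposed route — dualize, split by a trilinear modulation inequality with an angle term, then close via~\eqref{D}/\eqref{E} in each region — is the kind of machinery one reaches for when the bilinear form has a genuine null structure whose cancellation must be exploited (as in the $Q_{12}$ and $Q_{tj}$ estimates elsewhere in the paper). The product $D_+\varphi\,D_-\psi$ has no such cancellation, and none is needed: it is an unstructured product and the trade ``modulation~$\to$~spatial derivative'' already closes. Moreover, the modulation triangle inequality you wrote down is not correct as stated; the standard relations bound a \emph{product} involving the angle (something like $|\xi_1||\xi_3|\,\mathrm{angle}^2\lesssim (\max|\xi_i|)\sum\bigl||\tau_i|-|\xi_i|\bigr|$), not an additive term of the form $\min(|\xi_1|,|\xi_3|)(1+\mathrm{angle})$, and with that piece wrong the case analysis would not go through as written. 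Finally, there are no low-frequency discrepancies to repair: the symbols of $D_\pm$ are pointwise bounded by those of $\Lambda_\pm$, so $D_+\varphi\in H^{s,\theta}$ and $D_-\psi\in\mathcal H^{s+1,\theta-1}$ hold without any auxiliary Sobolev embedding argument.
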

\begin{proof}
We would like to show
\[
\|\Lambda^s \Lambda^{\theta-1+\epsilon}_- (D_+\varphi D_-\psi)\|_{L^2(\R^{2+1})} 
\lesssim\|\varphi\|_{\mathcal H^{s+1,\theta}}\|\psi\|_{\mathcal H^{s+1,\theta}}.
\]
This follows from showing
\[
H^{s,\theta} \cdot \mathcal H^{s+1,\theta-1}  \hookrightarrow H^{s,\theta-1+\epsilon},
\]
which by the product rule\footnote{On $L^2$ this is very easy to establish using triangle inequality.  See \cite{KS}.} for the operator $\Lambda^s$ in turn follows from
\begin{align*}
H^{0,\theta} \cdot \mathcal H^{s+1,\theta-1}   \hookrightarrow H^{0,\theta-1+\epsilon},\\
H^{s,\theta} \cdot \mathcal H^{1,\theta-1}  \hookrightarrow H^{0,\theta-1+\epsilon}.
\end{align*}
It is easy to check
\[
\mathcal H^{s+1,\theta-1}\hookrightarrow H^{s+1+\theta-1,0} \qand \mathcal H^{1,\theta-1}  \hookrightarrow H^{\theta,0},
\]
so we just need to show
\begin{align*}
H^{0,\theta}\cdot H^{s+\theta,0}\hookrightarrow H^{0,\theta-1+\epsilon},\\
H^{s,\theta} \cdot H^{\theta,0}\hookrightarrow H^{0,\theta-1+\epsilon},
\end{align*}
which are weaker than
\begin{align*}
H^{0,\theta}\cdot H^{s+\theta,0}\hookrightarrow L^2,\\
H^{s,\theta} \cdot H^{\theta,0}\hookrightarrow L^2,
\end{align*}
but those follow from the Klainerman-Selberg estimate \eqref{E} as long as $s+\theta > 1$, which holds by the conditions we impose on $s$ and $\theta$.
\end{proof}
An alternate approach could be to follow the set up used by \cite{KlainermanMachedon95} and estimate the integral directly.
\end{subsubsection}

\begin{subsubsection}{Elliptic Piece: Proof of Estimate (\ref{M2})}\label{ellpiece}
Recall we wish to show
\be
\|A_0w\|_{H^{s,\theta-1+\epsilon}} \lesssim \|A_{0}\| \|w\|_\h.\label{M21}
\ee
We need this estimate during our iteration, so we really mean $A_{0,j}$, but for simplicity we omit writing of the index $j$.  Now we choose a norm for $A_0$ to be anything that makes \eqref{M21} possible to establish.   This results in 
\[
\|A_0\|=\|A_0\|_{L^{\p}_tL^\infty_x}+\|D^sA_0\|_{L^{p}_tL^q_x},
\]
where
\be\label{aspqs}
\bs
&\p \in \left(1-2s,\frac{1}{2}\right),\\
\frac{2}{p}=1-\frac{1}{q}, \quad  &\max\left(\frac{1}{3}(1-2s),\frac{s}{2}\right) < \frac{1}{q} < \frac{2}{3}s.
\end{split}
\ee
For now we assume we can show $A_0 \in L^{\p}_tL^\infty_x \cap L^{p}_t\dot W^{s,q}_x$ and delay the proof to Section \ref{ellipticpiece1}, where the reasons for our choices of $\p, p, q$ should become clear.
We start by using $\theta-1+\epsilon < 0$ 
\be\label{beginning}
\|A_{0}w\|_{H^{s,\theta-1+\epsilon}} \leq \|\Lambda^s (A_{0}w) \|_{L^2(\R^{2+1})} \lesssim \|A_{0}w \|_{L^2(\R^{2+1})} + \|D^s(A_{0}w) \|_{L^2(\R^{2+1})} 
\ee
For the first term by H\"older's inequality
\be\label{firstT}
\begin{split}
\|A_{0}w \|_{L^2(\R^{2+1})}&\leq \|A_0\|_{L^\p_tL^\infty_x}\|w\|_{L^{\p'}_tL^{2}_x},\quad \frac{1}{\p}+\frac{1}{\p'}=\frac{1}{2}, \p \mbox{ as in }\eqref{aspqs} \\
&\lesssim\|A_0\|\|w\|_{H^{0,\theta}},\quad\mbox{by \eqref{C}}\\
&\leq \|A_0\|\|w\|_{H^{s,\theta}}.
\end{split} 
\ee
We bound the second term in \eqref{beginning}  by
\[
\|D^s(A_{0}w) \|_{L^2(\R^{2+1})}\lesssim \underbrace{\|A_0\|_{L^\p_tL^\infty_x}\|D^s w\|_{L^{\p'}_tL^{2}_x}}_{I}+
\underbrace{\|D^s A_0\|_{L^{p}_tL^q_x}\|w\|_{L^{p'}_tL^{q'}_x}}_{II}
\]
where $\frac{1}{p}+\frac{1}{p'}=\frac{1}{2}=\frac{1}{q}+\frac{1}{q'}$ and $p,q$ are as in\eqref{aspqs} and $\p$ as in \eqref{firstT}.   $I$ is handled similarly to \eqref{firstT} as follows.  Apply \eqref{C} with $u=D^sw$ to obtain\footnote{
Or we could bound $\|D^sw\|_{L^{\p'}_tL^{2}_x}$ by $\|\Lambda^sw\|_{L^{\p'}_tL^{2}_x}$ and apply \eqref{C} with $u=\Lambda^s u$.}
\be\label{usingC}
I\lesssim \|A_0\|\|D^s w\|_{H^{0,\theta}}\leq \|A_0\|\|w\|_{H^{s,\theta}}.
\ee 
We now consider II.  By the choices of $p,q,$ Klainerman-Selberg estimate \eqref{D} applies\footnote{See the discussion in Section \ref{Aestneeded} for an explanation.} and gives
\be\label{usingD2}
II \leq \|A_0\|\|w\|_{L^{p'}_tL^{q'}_x}\lesssim \|A_0\|\|w\|_{H^{1-\frac{2}{q'}-\frac{1}{p'},\theta}}.
\ee
From \eqref{aspqs} we also have
\be\label{usingD3}
II \lesssim\|A_0\|\|w\|_{H^{1-\frac{2}{q'}-\frac{1}{p'},\theta}}\lesssim \|A_0\|\|w\|_{H^{s,\theta}}.
\ee
and \eqref{M21} follows now from \eqref{firstT}, \eqref{usingC} and \eqref{usingD3}.
\begin{remark}\label{estinMKG}
The above proof illustrates other difficulties due to working in $2$ dimensions.  Initially, we wanted to follow Selberg's proof of estimate (38) in \cite{Selberg}, and just use $\|\Lambda^s A_0\|_{L^p_tL^q_x}$ norm.  Unfortunately in $2D$, the condition $sq>2$ needed to show $A_0 \in L^p_tL^\infty_x$ is disjoint from conditions needed to use Klainerman-Tataru estimate \eqref{A} and establish that $\Lambda^s A_0 \in {L^p_tL^q_x}$ in the first place.  This resulted in the $L^{\p}_tL^\infty_x \cap L^{p}_t\dot W^{s,q}_x$ space above and also having to employ Klainerman-Selberg estimate \eqref{D}, which was not needed in \cite{Selberg} for the proof of (38).
\end{remark}
\end{subsubsection}
\end{subsection}


\begin{subsection}{Elliptic Regularity: Estimates for $A_0$.}\label{ellipticpiece1}
Here we present a variety of a priori estimates for the nondynamical variable $A_0$.  At each point we could add the index $j$ to $A_0, df$ and $\phi$.  Therefore the presentation also applies to the iterates $A_{0,j}$.  It is an exercise to show that the estimates we obtain here are enough to solve for $A_{0,j}$ at each step as well as to close the iteration for $A_0$. 
Let $A_0$ solve 
\begin{align*}
\triangle A_0 =d^\ast [A_0,\ast df]+ d^\ast[df, \phi ]=-\partial_1[A_0,\partial_2 f]+\partial_2[A_0,\partial_1 f]+\partial_i[\partial_if,\phi].
\end{align*}
There is a wide range of estimates $A_0$ satisfies.  Nevertheless, the two spatial dimensions limit our ``range of motion.''  For example, it does not seem possible to place $A_0(t)$ in $L^2$.  We state the general results and only show the cases we need to prove $A_0 \in L^\p_tL^\infty_x \cap L^p_t\dot W^{s,q}_x$ as required in the last section.  The rest of the cases can be found in \cite{Czubak}.  We add that the proofs of both of the following theorems were originally inspired by Selberg's proof of his estimate (45) in \cite{Selberg}.  We start with the homogeneous estimates.
\begin{thm}\label{thm1}  Let $s > 0$, and let $0\leq a\leq s+1$ be given.  And suppose $1\leq p\leq \infty$ and $1<q<\infty$  satisfy 
\begin{align}
\max\left(\frac{1}{3}(1+2a-4s), \frac{1}{2}(1+a-4s), \frac{1}{2}\min(a,1)\right) < \frac{1}{q} < \frac{1+a}{2},\label{cq1}\\
1-\frac{2}{q}+a-2s \leq \frac{1}{p} \leq \frac{1}{2}\left(1-\frac{1}{q}\right),
\quad \frac{1}{p}<\left(1-\frac{2}{q}+a\right)\label{cp1}.
\end{align}
\begin{itemize}
\item[i)] If $0\leq a\leq 1$ and the $\h$ norm of $\nabla f$ is sufficiently small, then  $A_0 \in L^p_t\dot W^{a,q}_x$ and we have the following estimate
\be
\|A_0\|_{L^p_t\dot W^{a,q}_x}\lesssim \|\phi\|_\h\|\nabla f\|_\h.
\ee
\item[ii)] If $1<a\leq s+1$ and $A_0 \in L^p_tL^{(1/q-1/2)^{-1}}_x$, then $A_0 \in  L^p_t\dot W^{a,q}_x$ and we have the following estimate
\be
\|A_0\|_{L^p_t\dot W^{a,q}_x}\lesssim (\|A_0\|_{L^p_tL^{(1/q-1/2)^{-1}}_x}+\|\phi\|_\h)\|\nabla f\|_\h.
\ee
\end{itemize}
\end{thm}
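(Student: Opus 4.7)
The plan is to invert the Laplacian in the elliptic equation and write
\[
D^a A_0 = D^a \triangle^{-1}\bigl(-\partial_1[A_0,\partial_2 f]+\partial_2[A_0,\partial_1 f]+\partial_i[\partial_i f,\phi]\bigr),
\]
and then use the identity $D^a\triangle^{-1}\partial_i = -D^{-(1-a)}R_i$. Since Riesz transforms are bounded on $L^q_x$ for $1<q<\infty$, the spatial mapping properties of this operator are governed entirely by $D^{-(1-a)}$: when $0\leq a<1$ we use Hardy--Littlewood--Sobolev, which gives $\|D^{-(1-a)}g\|_{L^q_x}\lesssim\|g\|_{L^{q_1}_x}$ with $\tfrac{1}{q_1}=\tfrac{1}{q}+\tfrac{1-a}{2}$; for $a=1$ the operator is just a Riesz transform; for $1<a\leq s+1$ we lose $a-1$ derivatives and have to trade them against the extra integrability provided by the hypothesis $A_0\in L^p_tL^{(1/q-1/2)^{-1}}_x$. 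The first objective is to reduce the $L^p_t\dot W^{a,q}_x$ estimate to a spatial $L^p_tL^{q_1}_x$ estimate on the nonlinearities.

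Once the reduction is done, I would estimate the bilinear terms $[\partial_j f,\phi]$ and $[A_0,\partial_j f]$ by Hölder in space and time, followed by the Klainerman--Selberg Strichartz-type bound \eqref{D}. For the ``good'' nonlinearity this reads schematically
\[
\|[\partial_j f,\phi]\|_{L^p_tL^{q_1}_x}\leq \|\partial_j f\|_{L^{p_2}_tL^{q_2}_x}\|\phi\|_{L^{p_3}_tL^{q_3}_x}\lesssim \|\nabla f\|_{\h}\|\phi\|_{\h},
\]
where $\tfrac{1}{p}=\tfrac{1}{p_2}+\tfrac{1}{p_3}$, $\tfrac{1}{q_1}=\tfrac{1}{q_2}+\tfrac{1}{q_3}$, and the exponents on the right are chosen so that \eqref{D} applies and so that the resulting regularity indices $1-\tfrac{2}{q_i}-\tfrac{1}{p_i}$ are at most $s$. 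The admissibility of such a splitting is precisely what the parameter conditions \eqref{cq1} and \eqref{cp1} are engineered to guarantee, and verifying this requires a routine but careful arithmetic check that I would carry out by picking, say, $q_2=q_3$ and $p_2=p_3$ symmetric and then reading off the constraint on $\tfrac{1}{q}$ and $\tfrac{1}{p}$.

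For the self-interaction $[A_0,\partial_j f]$ I would treat the two cases separately. When $0\leq a\leq 1$, I would bound $\|[A_0,\partial_j f]\|_{L^p_tL^{q_1}_x}\leq \|A_0\|_{L^p_t\dot W^{a,q}_x}\|\partial_j f\|_{L^\infty_tL^{q_4}_x}$ (with the appropriate Sobolev embedding $\dot W^{a,q}\hookrightarrow L^{(1/q-a/2)^{-1}}$ applied to $A_0$ and an inequality of Klainerman--Selberg type applied to $\partial_j f$), obtaining an estimate of the form $C\|\nabla f\|_{\h}\|A_0\|_{L^p_t\dot W^{a,q}_x}$; smallness of $\|\nabla f\|_{\h}$ then lets one absorb this term to the left-hand side, which is precisely why the small-data hypothesis appears in part (i). In the high-regularity range $1<a\leq s+1$ handled by part (ii), I would instead use Hölder as $\|[A_0,\partial_j f]\|_{L^p_tL^{q_1}_x}\leq \|A_0\|_{L^p_tL^{(1/q-1/2)^{-1}}_x}\|\partial_j f\|_{L^\infty_tL^2_x}$ (up to an additional derivative correction), converting the hypothesis on $A_0$ into the desired bound without needing smallness.

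I expect the bookkeeping of the exponents to be the main obstacle: the three conditions in \eqref{cq1} and the two in \eqref{cp1} must simultaneously permit the HLS step, the Hölder splitting, and the Klainerman--Selberg embedding \eqref{D}, and each of these three ingredients contributes a different constraint on $(a,p,q,s)$. Once the exponent algebra is verified, the analytic content is entirely standard. A secondary subtlety is keeping the argument for $[A_0,\partial_j f]$ non-circular; in case (i) this is handled by the smallness absorption, and in case (ii) by the explicit assumption $A_0\in L^p_tL^{(1/q-1/2)^{-1}}_x$, which must itself be produced from an earlier, lower-regularity instance of the theorem applied in a bootstrap fashion.
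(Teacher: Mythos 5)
Your reduction step (writing $D^a\triangle^{-1}\partial_i=-D^{-(1-a)}R_i$, moving to $L^p_tL^{q_1}_x$ by Hardy--Littlewood--Sobolev, and handling the self-interaction $[A_0,\nabla f]$ by H\"older + Sobolev + absorbing a small $\|\nabla f\|_\h$ factor) is exactly what the paper does, and the absorption mechanism correctly explains where the smallness hypothesis in part (i) is used. The problem is in how you propose to handle the ``good'' nonlinearity $[\nabla f,\phi]$.

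The paper estimates $\|D^{a-1}(\nabla f\,\phi)\|_{L^p_tL^q_x}$ directly by the Klainerman--Tataru \emph{bilinear} estimate \eqref{A}, whereas you propose to first pass to $\|\nabla f\,\phi\|_{L^p_tL^{q_1}_x}$ and then apply H\"older in space-time followed by the \emph{linear} embedding \eqref{D} to each factor separately. These two routes have the same scaling (so \eqref{c4} and the scaling inequality $1-\tfrac{2}{q}+a-2s\le\tfrac{1}{p}$ come out identical), but they do not give the same constraint set. Condition \eqref{D} requires $\tfrac{2}{p_i}\le\tfrac12-\tfrac{1}{q_i}$ for each factor; summing these over $i=2,3$ and using $\tfrac{1}{q_2}+\tfrac{1}{q_3}=\tfrac{1}{q_1}=\tfrac{1}{q}+\tfrac{1-a}{2}$ gives
\[
\frac{1}{p}\;\le\;\frac12-\frac{1}{2q}-\frac{1-a}{4},
\]
independently of how you distribute the exponents between the two factors. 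For $a<1$ this is strictly stronger than the admissibility condition $\tfrac{1}{p}\le\tfrac12\bigl(1-\tfrac{1}{q}\bigr)$ in \eqref{cp1}, which coincides with the Klainerman--Tataru condition \eqref{c1}. The gap is not cosmetic: in Section~\ref{Aestneeded} the theorem is invoked with $a=s<1$ and the endpoint choice $\tfrac{2}{p}=1-\tfrac{1}{q}$, i.e.\ exactly at $\tfrac{1}{p}=\tfrac12\bigl(1-\tfrac{1}{q}\bigr)$, which your splitting cannot reach. This is the familiar phenomenon that a genuine bilinear space-time estimate such as \eqref{A} is strictly stronger than what one can assemble from H\"older plus one-function Strichartz embeddings; the ``bookkeeping'' you flag as the main obstacle is in fact hiding a loss that cannot be recovered by choosing the H\"older split more cleverly, symmetric or not. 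To prove the theorem with the stated range you must apply \eqref{A} to the term $D^{a-1}(\nabla f\,\phi)$ directly (the verification that \eqref{c1}--\eqref{c4} hold under \eqref{cq1}, \eqref{cp1} is carried out in Section~\ref{KTT}), reserving the H\"older/Sobolev/absorption argument for the $[A_0,\nabla f]$ term only, as you already do.

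Two minor points. First, the upper bound $\tfrac{1}{q}<\tfrac{1+a}{2}$ in \eqref{cq1} is indeed the HLS admissibility condition $q_1>1$, so that part of your reduction is correctly calibrated. Second, your closing remark about part (ii) requiring an a priori bound on $A_0\in L^p_tL^{(1/q-1/2)^{-1}}_x$ produced by a lower-regularity instance of the theorem is a fair reading of the statement, but it is a hypothesis, not something you need to produce inside this proof.
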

\begin{cor}\label{corr1}
Let $s>0$, then $A_0 \in C_b(I:\dot H^a_x)$, where
\[ 0 < a \leq
   \left\{ \begin{array}{l} 
\begin{split}
 2s&\quad\mbox{if}\quad 0<s\leq 1\\
 1+s&\quad\mbox{if}\quad 1< s
\end{split}
\end{array} \right. \]
\end{cor}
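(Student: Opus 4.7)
The plan is to deduce the corollary from the specialization of Theorem \ref{thm1} to $p=\infty$, $q=2$, together with a soft continuity-in-time argument.

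\textbf{Step 1 (pointwise bound).} For $0<a<\min(1,2s)$, I would set $p=\infty$ and $q=2$ in Theorem \ref{thm1}(i). A direct check shows that the constraints \eqref{cq1}--\eqref{cp1} reduce to $a<1$, $a<4s$, $a<\tfrac14+2s$, and $a\le 2s$, all of which hold. The theorem then produces
\[
\|A_0\|_{L^\infty_t\dot H^a_x}\lesssim \|\phi\|_\h\|\nabla f\|_\h.
\]
For the rest of the range (which appears only once $s>\tfrac12$ allows $a\ge 1$), I would invoke Theorem \ref{thm1}(ii) with $q$ slightly below $2$. The required auxiliary membership $A_0\in L^\infty_t L^{(1/q-1/2)^{-1}}_x$ is supplied by Sobolev embedding from a part (i) estimate at some $a_0<1$. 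Letting $q\uparrow 2$ and using the spatial Sobolev embedding $\dot W^{a,q}_x\hookrightarrow \dot H^{a-2(1/q-1/2)}_x$ then recovers the target $\dot H^a_x$ up to an arbitrarily small loss, filling out the full ranges $a\le 2s$ (for $s\le 1$) and $a\le s+1$ (for $s>1$).

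\textbf{Step 2 (continuity in $t$).} From $u,v\in\mathcal H_T^{s+1,\theta}\hookrightarrow C_b([0,T],H^{s+1})\cap C_b^1([0,T],H^s)$ and \eqref{newphidf}, one reads off $\phi,\nabla f\in C_b(I,H^s)$. Subtracting the elliptic equation at two times $t_1,t_2$ and repeating the bilinear estimates that underlie Theorem \ref{thm1}, I would obtain
\[
\|A_0(t_1)-A_0(t_2)\|_{\dot H^a}\lesssim \|\phi(t_1)-\phi(t_2)\|_{H^s}+\|\nabla f(t_1)-\nabla f(t_2)\|_{H^s},
\]
after absorbing the $A_0$-linear contribution using smallness of $\|\nabla f\|_\h$. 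The right-hand side tends to $0$ as $t_1\to t_2$, and combined with the $L^\infty_t$ bound from Step 1 this yields $A_0\in C_b(I,\dot H^a_x)$.

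\textbf{Main obstacle.} The delicate point is the parameter-juggling for $a\ge 1$: Theorem \ref{thm1}(ii) strictly requires $q<2$, so reaching the $\dot H^a$ endpoint forces one to take $q\uparrow 2$ and to verify that the constraints \eqref{cq1}--\eqref{cp1} remain simultaneously feasible as $q$ approaches $2$. The continuity upgrade is otherwise routine, since the bilinear estimates underlying it are pointwise in $t$.
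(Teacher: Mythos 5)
Your Step 1 for $0<s<\tfrac12$ is exactly the paper's proof: the paper takes $q=2$, $p=\infty$ in Theorem~\ref{thm1}(i) to get $A_0\in L^\infty_t\dot H^a_x$ for $a\le 2s$, and then asserts continuity in time via a contraction argument; the paper explicitly restricts to $0<s<\tfrac12$ and defers $s\ge\tfrac12$ to \cite{Czubak}. One small correction to your Step 1: you restrict to $0<a<\min(1,2s)$, but \eqref{cp1} at $p=\infty$, $q=2$ is the non-strict inequality $a\le 2s$, and for $s<\tfrac12$ the strict constraints $a<1$, $a<4s$, $a<\tfrac14+2s$ are all automatic when $a\le 2s$; so you actually reach the endpoint $a=2s$, matching the corollary.

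Where you diverge from the paper is the attempt to cover $s\ge\tfrac12$ in-paper via Theorem~\ref{thm1}(ii) with $q\uparrow 2$. As you candidly flag, this only yields the target $\dot H^a$ range up to an $\varepsilon$-loss from the Sobolev embedding $\dot W^{a,q}_x\hookrightarrow\dot H^{a-2(1/q-1/2)}_x$ with $q<2$; that does not close the endpoints $a=2s$ (for $\tfrac12\le s\le 1$) or $a=1+s$ (for $s>1$) claimed in the statement. This is a genuine gap in the extended range, but it is also not addressed by the paper itself, which simply cites \cite{Czubak} there. For the continuity upgrade, your difference-in-time argument and the paper's contraction argument are morally the same move (both rely on absorbing the $A_0$-linear term by smallness of $\|\nabla f\|_\h$); just note that Theorem~\ref{thm1} as stated gives spacetime $L^p_t$ bounds, so the fixed-time difference estimate really needs the fixed-time bilinear/Sobolev product estimates on $\R^2$ rather than a literal reuse of the theorem.
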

\begin{proof}[Proof of Corollary \ref{corr1}]
Suppose $0<s<\frac{1}{2}$.  Then use part i) of the theorem with $q=2$ and $p=\infty$ to obtain $A_0 \in L^\infty_t\dot H^a_x$ for $a\leq 2s$.  $A_0$ continuous as a function of time easily follows from a contraction argument in $C_b(I:\dot H^a_x)$ using $L^\infty_t\dot H^a_x$ estimates.  $s\geq\frac{1}{2}$ is considered in \cite{Czubak}.
\end{proof}
So far we just need $s>0$ in order to make the estimates work.  The requirement for $s>\frac{1}{4}$ does not come in till we start looking at the nonhomogeneous spaces, where also the range of $p$ and $q$ is smaller.  However, we can distinguish two cases $aq<2$ and $aq > 2$.
\begin{thm}\label{thm2}  Let $s > 0$, and suppose the $\h$ norm of $\nabla f$ is sufficiently small.   
\begin{itemize}
\item[i)]  If $aq < 2$ for $0< a <(2s,1)$ and if $p$ and $q$ satisfy 
\begin{align}
\max\left(\frac{1}{2}+a-2s, \frac{a}{2}\right) < &\frac{1}{q} < \frac{1}{2},\label{cq2}\\
1-\frac{2}{q}+a-2s \leq &\frac{1}{p} < \frac{1}{2} -\frac{1}{q},\label{cp2}
\end{align}
then $A_0 \in L^p_tW^{a,q}_x$ and we have the following estimate
\be
\|A_0\|_{L^p_tW^{a,q}_x}\lesssim \|\phi\|_\h\|\nabla f\|_\h.
\ee
\item[ii)] If $aq > 2$, then we need $s>\frac{1}{4}$ and $0<a<\min(4s-1,1+s,2s)$.  Suppose $p$ and $q$ also satisfy
\begin{align}
\max\left(\frac{a-s}{2},\frac{1}{2}+a-2s\right) &< \frac{1}{q} < \frac{1}{2}\min(a,1),\label{cq3}\\
1-\frac{2}{q}+a-2s &\leq \frac{1}{p} < \frac{1}{2} -\frac{1}{q},\label{cp3}
\end{align}
then $A_0 \in L^p_tW^{a,q}_x$ and we have the following estimate
\be
\|A_0\|_{L^p_tW^{a,q}_x}\lesssim \|\phi\|_\h\|\nabla f\|_\h.
\ee
\end{itemize}
\end{thm}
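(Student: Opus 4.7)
The approach would be to set up the elliptic equation as a fixed-point identity
\[
A_0 = \triangle^{-1}\bigl(-\partial_1[A_0,\partial_2 f]+\partial_2[A_0,\partial_1 f]+\partial_i[\partial_if,\phi]\bigr)
\]
and prove the bilinear estimate
\[
\|A_0\|_{L^p_tW^{a,q}_x}\lesssim \|\phi\|_\h\|\nabla f\|_\h+\|A_0\|_{L^p_tW^{a,q}_x}\|\nabla f\|_\h,
\]
so that smallness of $\|\nabla f\|_\h$ absorbs the $A_0$-term and yields the claimed bound. The homogeneous $\dot W^{a,q}_x$ portion of the norm is already delivered by Theorem \ref{thm1} under the same hypotheses, so the genuinely new content is the $L^q_x$ piece.

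For the $L^q_x$ piece, the right-hand side of the elliptic equation is a spatial divergence, so
\[
A_0(t)=R_j(-\triangle)^{-1/2}G(t),
\]
with $G$ a sum of the products $[\partial f,\phi]$ and $[A_0,\partial f]$. The Riesz transform is $L^r\to L^r$ bounded and Hardy--Littlewood--Sobolev in $\R^2$ gives $(-\triangle)^{-1/2}:L^r\to L^q$ whenever $\tfrac{1}{r}=\tfrac{1}{q}+\tfrac{1}{2}$ with $1<r<2$. Since $\tfrac{1}{q}<\tfrac{1}{2}$ in both parts of the theorem, this applies and produces
\[
\|A_0(t)\|_{L^q_x}\lesssim \|[\partial f,\phi](t)\|_{L^r_x}+\|[A_0,\partial f](t)\|_{L^r_x},
\]
reducing matters to two bilinear spacetime inequalities in $L^p_tL^r_x$.

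Each such inequality I would obtain by H\"older: split $\tfrac{1}{p}=\tfrac{1}{p_1}+\tfrac{1}{p_2}$ and $\tfrac{1}{r}=\tfrac{1}{q_1}+\tfrac{1}{q_2}$ and control each factor using the Klainerman--Selberg mixed-norm embedding \eqref{D} (or \eqref{C} when one factor naturally sits in $L^{p_i}_tL^2_x$), so that every factor falls into $H^{s,\theta}\hookrightarrow L^{p_i}_tL^{q_i}_x$. For the $A_0$-containing product I would place $A_0$ itself in $L^p_tL^q_x$ to close the self-reference, and let $\partial f$ carry a Strichartz norm bounded by $\|\nabla f\|_\h$. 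The conditions \eqref{cq2}--\eqref{cp2} and \eqref{cq3}--\eqref{cp3} are precisely what render the four resulting admissibility inequalities simultaneously solvable; the dichotomy $aq<2$ versus $aq>2$ reflects whether the spatial Sobolev embedding places $\dot W^{a,q}_x$ into an $L^{q^*}_x$ space or into $L^\infty_x$, and the threshold $s>\tfrac14$ in case (ii) is forced by requiring $\tfrac{1}{2}+a-2s<\tfrac{a-s}{2}$.

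The main obstacle is the exponent bookkeeping: one must verify that the stated windows for $(p,q,a,s)$ leave enough room to partition H\"older exponents so that every invocation of \eqref{D} is admissible and the four Strichartz constraints hold simultaneously, and that the Sobolev embedding chosen for $A_0$ is consistent with the self-referential term. Once that is done, the $A_0$-term is absorbed by smallness of $\|\nabla f\|_\h$, the analogous Lipschitz estimate follows from the same bilinear inequality applied to differences, and a contraction argument delivers the unique fixed point satisfying the advertised bound.
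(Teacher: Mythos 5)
Your overall skeleton matches the paper's proof: invert $\triangle$, split $W^{a,q}=L^q+\dot W^{a,q}$, use a Sobolev/Riesz step to drop $D^{-\sigma}$, handle the self-referential term $[A_0,\partial f]$ by a plain H\"older inequality paired with $\|\nabla f\|_{L^\infty_tL^2_x}\lesssim\|\nabla f\|_\h$, and absorb by smallness of $\|\nabla f\|_\h$. For that piece your bookkeeping is fine and coincides with the paper.

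The gap is in the source term $D^{-\sigma}([\nabla f,\phi])$ with $\sigma\in\{1,\,1-a\}$. You propose to bound it by H\"older and then put each factor in a Strichartz space via \eqref{D} (or \eqref{C}). The paper instead applies the genuinely \emph{bilinear} Klainerman--Tataru estimate \eqref{A}, and this is not a cosmetic choice. After you Sobolev $D^{-\sigma}\colon L^r_x\to L^q_x$ with $\frac1r=\frac1q+\frac{\sigma}{2}$, splitting and applying \eqref{D} to each factor forces $\frac{2}{p}\leq 1-\frac1r=1-\frac1q-\frac{\sigma}{2}$, i.e.\ $\frac1p\leq\frac12-\frac{1}{2q}-\frac{\sigma}{4}$. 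But \eqref{A} only needs $\frac1p\leq\frac12(1-\frac1q)$ and $\sigma<2(1-\frac1q-\frac1p)$; the latter with $\sigma=1$ is exactly \eqref{cp3}'s upper bound $\frac1p<\frac12-\frac1q$, which your route cannot reach. Concretely, with $s=0.3$, $a=0.1$, $\frac1q=0.03$, $\frac1p=0.45$ (admissible for case (ii)): for $\sigma=1-a=0.9$ your constraint demands $\frac1p\leq 0.5-0.015-0.225=0.26$, while the theorem needs $\frac1p\geq 1-\frac2q+a-2s=0.44$. Swapping one factor's \eqref{D} for \eqref{C} fails too, since then $\frac{1}{q_2}=\frac1r-\frac12<0$. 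So the "exponent bookkeeping" you flag as the main obstacle is in fact an obstruction: without \eqref{A} the four admissibility inequalities are not simultaneously solvable over the stated range of $(p,q,a,s)$, and \eqref{A} is precisely the missing ingredient.

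Two smaller points. First, the $\dot W^{a,q}$ piece is not "already delivered by Theorem \ref{thm1} under the same hypotheses"; the $(p,q)$ windows differ (Theorem \ref{thm1} allows $\frac1p\leq\frac12(1-\frac1q)$, whereas Theorem \ref{thm2} needs the stricter $\frac1p<\frac12-\frac1q$), and the paper re-derives the $D^{a-1}$ terms inside the present proof. Second, the $s>\frac14$ threshold in case (ii) comes from requiring $\frac12+a-2s<\frac{a}{2}$ together with $aq>2$ (hence $a<4s-1$), not from $\frac12+a-2s<\frac{a-s}{2}$.
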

\begin{cor}\label{corr2}
If $s>\frac{1}{4}$ and the $\h$ norm of $\nabla f$ is sufficiently small, we have in particular $A_0 \in L^p_tL^\infty_x$ for $p$ satisfying
\be
1-2s < \frac{1}{p}< \frac{1}{2},
\ee
and we have the following estimate
\be
\|A_0\|_{L^p_tL^\infty_x}\lesssim \|\phi\|_\h\|\nabla f\|_\h.
\ee
\end{cor}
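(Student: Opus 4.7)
The plan is to deduce this directly from part ii) of Theorem \ref{thm2} combined with the Sobolev embedding $W^{a,q}(\R^2) \hookrightarrow L^\infty(\R^2)$, which holds precisely when $aq > 2$. Given $p$ with $1 - 2s < 1/p < 1/2$, I would exhibit parameters $(a,q)$ satisfying the hypotheses of Theorem \ref{thm2} ii) together with $aq > 2$, and then chain the two estimates as
\[
\|A_0\|_{L^p_tL^\infty_x} \lesssim \|A_0\|_{L^p_tW^{a,q}_x} \lesssim \|\phi\|_\h\|\nabla f\|_\h.
\]

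The heart of the argument is thus a parameter chase. I would fix a small auxiliary parameter $\mu>0$ and try $1/q = \mu$, $a = 2\mu(1+\mu)$, so that $aq = 2(1+\mu) > 2$ while $a \to 0$ as $\mu \to 0$. With this choice, as $\mu \to 0^+$ the lower bound in \eqref{cp3} tends to $1 - 2s$ and the upper bound tends to $1/2$, precisely matching the target range for $1/p$. The constraints \eqref{cq3} on $1/q$ also hold for small $\mu$ because $s > 1/4$ (so that $1/2 - 2s < 0 < \mu$ and $(a-s)/2 \approx -s/2 < \mu$), and $a < \min(4s-1, 1+s, 2s)$ is automatic for $\mu$ small since each entry in the minimum is a positive constant. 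Writing $1/p = 1 - 2s + \delta$ with $\delta \in (0, 2s - 1/2)$ and choosing $\mu$ small enough relative to $\delta$ makes all conditions simultaneously satisfied.

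The main (and only real) obstacle is verifying that the region carved out by \eqref{cq3}--\eqref{cp3} is nonempty and contains a point with $aq > 2$ for every admissible $p$; this is precisely where the hypothesis $s > 1/4$ is used, both to open the window $1 - 2s < 1/p < 1/2$ and to keep $4s - 1 > 0$ and $2s - 1/2 > 0$ so that the $(a,q)$ box remains nonempty. Once such $(a,q)$ is produced, Theorem \ref{thm2} ii) supplies the $L^p_tW^{a,q}_x$ bound and the two-dimensional Sobolev embedding upgrades it to $L^p_tL^\infty_x$. No additional analytic ingredient beyond Theorem \ref{thm2} is required.
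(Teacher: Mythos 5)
Your proposal is correct and follows exactly the route the paper takes: for each admissible $p$, exhibit $(a,q)$ satisfying the hypotheses of Theorem~\ref{thm2}~ii), then upgrade $L^p_tW^{a,q}_x$ to $L^p_tL^\infty_x$ via the two-dimensional Sobolev embedding $W^{a,q}(\R^2)\hookrightarrow L^\infty(\R^2)$ for $aq>2$. The paper simply asserts that such $(a,q)$ exist; your explicit choice $1/q=\mu$, $a=2\mu(1+\mu)$ with $\mu\to0^+$ supplies the parameter chase that the paper leaves implicit, and your verification that the limiting constraints recover exactly the window $1-2s<1/p<1/2$ is a correct filling-in of that detail.
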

\begin{proof}[Proof of Corollary \ref{corr2}]
For each $p \in (1-2s,\frac{1}{2})$ we can find some $a$ and $q$, which satisfy the conditions of Theorem \ref{thm2}, part ii).  The corollary then follows from the Sobolev embedding: $W^{a,q}(\mathbb R^2) \hookrightarrow L^\infty(\mathbb R^2)$ for $aq > 2$.
\end{proof}
\begin{remark}
Here we also would like to emphasize the arrival of the necessity of $s>\frac{1}{4}$.  Conditions on $\frac{1}{p}$ in
\eqref{cp3} are needed so we can use below the Klainerman-Tataru estimate \eqref{A}.  In order to be able to choose such  $\frac{1}{p}$, obviously $1-\frac{2}{q}+a-2s$ must be strictly less than $\frac{1}{2} -\frac{1}{q}$.  This forces $\frac{1}{q}$ to be strictly greater than $\frac{1}{2}+a-2s$. We also need $aq>2$ to use the Sobolev embedding in Corollary \ref{corr2}, so if we want to be able to find $q$ between $\frac{1}{2}+a-2s$
and $\frac{a}{2}$, a is forced to be strictly less than $4s-1$.  Therefore $s$ must be greater than $\frac{1}{4}$.  See below for another instance of requiring $s>\frac{1}{4}$.
\end{remark}

\subsection{Proof of estimates needed in \ref{ellpiece}}\label{Aestneeded}
Recall we would like to show $A_0 \in L^{\p}_tL^\infty_x \cap L^p_t\dot W^{s,q}_x$.
Therefore, we are interested in part i) of Theorem \ref{thm1} and part ii) in Theorem \ref{thm2}, so we can conclude Corollary \ref{corr2}.  Moreover, we need a specific case of part i) in Theorem \ref{thm1}, because we need $A_0 \in L^p_t\dot W^{s,q}_x$, where $p, q$ in addition satisfy
\be\label{newpq}
 1-\frac{2}{p} \leq \frac{1}{q}<\frac{1}{2}, \qand \frac{2}{q}-\frac{1}{2}+\frac{1}{p}\leq s,
\ee
so we can use
\be
H^{s,\theta} \hookrightarrow H^{1-(1-\frac{2}{q})-(\frac{1}{2}-\frac{1}{p}),\theta}(\R^{2+1}) \hookrightarrow L^{(1/2-1/p)^{-1}}_tL^{(1/2-1/q)^{-1}}_x,
\ee
in \eqref{usingD2} and \eqref{usingD3}.  When we put \eqref{newpq} together with \eqref{cq1} and \eqref{cp1} with $a=s$, we obtain second line of \eqref{aspqs}, namely
\be\label{aspqs1}
\frac{2}{p}=1-\frac{1}{q}, \quad  \max\left(\frac{1}{3}(1-2s),\frac{s}{2}\right) < \frac{1}{q} <  \frac{2}{3}s.
 \ee
\begin{remark}
Observe that in order to be able to find such $q$ we must have $s>\frac{1}{4}$.
\end{remark}
Consider
\be\label{e3}
\begin{split}
\|A_0\|_{L^p_t\dot W^{s,q}_x}&=\|\triangle^{-1}(d^\ast [A_0,\ast df]+ d^\ast[df, \phi ])\|_{L^p_t\dot W^{s,q}_x}\\
 														 &\lesssim\|D^{-1}(A_0\nabla f)\|_{L^p_t\dot W^{s,q}_x}+ \|D^{-1}(\nabla f\phi)\|_{L^p_t\dot W^{s,q}_x}\\
 														 &\lesssim\|D^{s-1}(A_0\nabla f)\|_{L^p_tL^q_x}+ \|D^{s-1}(\nabla f\phi)\|_{L^p_tL^q_x}\\
 														 &\lesssim\|A_0\nabla f\|_{L^p_tL^r_x}+ \|D^{s-1}(\nabla f\phi)\|_{L^p_tL^q_x},
\end{split}
\ee
where we use the Sobolev embedding with $\frac{1}{q}=\frac{1}{r}-\frac{1-s}{2}.$  The latter term is bounded
by $\|\nabla f\|_\h\|\phi\|_\h$ using the Klainerman-Tataru estimate \eqref{A}, whose application we discuss in the section below.  For the former we use $\frac{1}{r}=\frac{1}{q}+\frac{1-s}{2}=(\frac{1}{q}-\frac{s}{2})+\frac{1}{2}$
\be
\|A_0\nabla f\|_{L^p_tL^r_x} \leq \|A_0\|_{L^p_tL_x^{(1/q-s/2)^{-1}}}\|\nabla f\|_{L^\infty_tL^2_x}\lesssim \|A_0\|_{L^p_t\dot W^{s,q}_x}\|\nabla f\|_{\h}.
\ee
Then if the $\h$ norm of $\nabla f$ is sufficiently small, we obtain
\be
\|A_0\|_{L^p_t\dot W^{s,q}_x}\lesssim \|\nabla f\|_\h\|\phi\|_\h,
\ee
as needed.
\newline\indent
For the non-homogeneous estimate, since here $\frac{1}{4}<s<\frac{1}{2}$ the upper bound for $a$ is simply $4s-1$.  In addition, for our purposes right now it suffices to show the estimate for one particular $a$.  Therefore we set $0<a<\min(s,4s -1)$ for $\frac{1}{4}<s<\frac{1}{2}$, and we let $p,q$ satisfy \eqref{cq3} and \eqref{cp3}.   We have
\be\label{e4}
\begin{split}
\|A_0\|_{L^p_t W^{a,q}_x}&\lesssim\|D^{-1}(A_0\nabla f)\|_{L^p_t W^{a,q}_x}+ \|D^{-1}(\nabla f\phi)\|_{L^p_t W^{a,q}_x}\\
 												 &\lesssim\|D^{-1}(A_0\nabla f)\|_{L^p_tL^q_x}+ \|D^{-1}(\nabla f\phi)\|_{L^p_tL^q_x}\\
 												 &\quad +\|D^{a-1}(A_0\nabla f)\|_{L^p_tL^q_x}+\|D^{a-1}(\nabla f\phi)\|_{L^p_tL^q_x}.
\end{split}
\ee
Klainerman-Tataru estimate \eqref{A} handles the second and the last term (see below).  Consider the first term
\be
\bs
\|D^{-1}(A_0\nabla f)\|_{L^p_tL^q_x} &\lesssim\|A_0\nabla f\|_{L^p_tL^r_x},\quad \frac{1}{q}=\frac{1}{r}-\frac{1}{2},\\
															 &\leq \|A_0\|_{L^p_tL^q_x}\|\nabla f\|_{L^\infty_tL^2_x}\\
	 &\leq \|A_0\|_{L^p_tW^{a,q}_x}\|\nabla f\|_{\h}.
\end{split}
\ee
For the third term we have
\be\nonumber
\begin{split}
\|D^{a-1}(A_0\nabla f)\|_{L^p_tL^q_x}&\lesssim \|A_0\nabla f\|_{L^p_tL^r_x},\quad \frac{1}{q}=\frac{1}{r}-\frac{1-a}{2}\\
															 &\lesssim \|A_0\|_{L^p_tL^q_x}\|D^a\nabla f\|_{L^\infty_tL^2_x}, \quad \frac{1}{r}=\frac{1}{q}+(\frac{1}{2} -\frac{a}{2})\\
											         &\lesssim \|A_0\|_{L^p_tW^{a,q}_x}\|\nabla f\|_{\h},
\end{split}
\ee
Then as before, this completes the proof if the $\h$ norm of $\nabla f$ is sufficiently small. 
\end{subsection}
\end{section}

\subsection{Applying Klainerman-Tataru Theorem}\label{KTT}\nonumber
We said that several estimates above follow from the Klainerman-Tataru estimate \eqref{A}.  We need to check that this is in fact the case.  We begin by stating the theorem.  We state it for two dimensions only, and as it was given in \cite{KS} (the original result holds for $n\geq 2$).
\begin{theorem*}(\cite{KlainermanTataru})
Let $1 \leq p \leq \infty$, $ 1 \leq q < \infty$.  Assume that
\begin{eqnarray}
 \frac{1}{p} \leq \frac{1}{2}\left(1-\frac{1}{q}\right),\label{c1}\\
0 < \sigma < 2\left(1-\frac{1}{q} -\frac{1}{p}\right),\label{c2}\\
s_1, s_2 < 1 - \frac{1}{q}-\frac{1}{2p},\label{c3}\\
s_1+s_2+\sigma=2(1- \frac{1}{q}-\frac{1}{2p}).\label{c4} 
\end{eqnarray}
Then 
\[
\|D^{-\sigma}(uv)\|_{L^p_tL^q_x(\R^2)} \lesssim \|u\|_{H^{s_1,\theta}} \|v\|_{H^{s_2,\theta}},
\]
provided $\theta > \frac{1}{2}$.
\end{theorem*}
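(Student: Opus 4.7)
The plan is to prove the estimate through the standard Klainerman--Tataru strategy: transfer from the Wave--Sobolev space to free waves using the hypothesis $\theta > \frac{1}{2}$, and then reduce to a bilinear Fourier restriction estimate on the light cone in $\R^{2+1}$.

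First, I would use the modulation weight to decompose each input. Writing each $u\in H^{s_1,\theta}$ on the Fourier side and dyadically localizing in modulation $L := 1 + \big| |\tau|-|\xi| \big|$, each piece $u_L$ admits a representation as a superposition of free waves,
\[
u_L(t,x) = \int_{\R} e^{it\mu}\bigl(e^{it|D|}f_{L,\mu}^+ + e^{-it|D|}f_{L,\mu}^-\bigr)\,d\mu,
\]
with $|\mu| \lesssim L$ and $\int \|f_{L,\mu}^{\pm}\|_{\dot H^{s_1}}^2\,d\mu \lesssim L^{-2\theta}\|u_L\|_{H^{s_1,\theta}}^2$. Cauchy--Schwarz in $\mu$ and in $L$ is summable precisely because $\theta > \frac{1}{2}$, so it suffices to prove the free-wave bilinear inequality
\[
\|D^{-\sigma}\bigl(e^{\pm it|D|}f \cdot e^{\pm it|D|}g\bigr)\|_{L^p_tL^q_x(\R^{2+1})} \lesssim \|f\|_{\dot H^{s_1}}\|g\|_{\dot H^{s_2}}
\]
uniformly in the sign choices.

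The second step is to recast this as an adjoint bilinear restriction estimate on the light cone $\Gamma \subset \R^{2+1}$. I would perform a Littlewood--Paley decomposition in spatial frequency and a Whitney decomposition of $\Gamma \times \Gamma$ into pairs of angularly separated caps. On opposite-sign waves supported in transverse caps of angular scale $2^{-j}$ and frequency scales $N_1 \leq N_2$, the core building block is the bilinear $L^2$ Strichartz estimate
\[
\|e^{it|D|}f \cdot e^{-it|D|}g\|_{L^2_{t,x}} \lesssim 2^{-j/2} N_1^{1/2}\|f\|_{L^2}\|g\|_{L^2},
\]
obtained by Plancherel and a direct computation of the Jacobian of the cone-sum map, noting that in $2+1$ dimensions the cone is only one-dimensional transverse to the radial direction. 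Interpolating this endpoint estimate with the trivial H\"older--Sobolev $L^\infty_{t,x}$ bound yields the full $L^p_t L^q_x$ range permitted by \eqref{c1}, while the identity \eqref{c4} expresses dimensional balance under the parabolic scaling of the wave equation.

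The main obstacle is the dyadic summation, where the cap-level estimates must be assembled into the full inequality without logarithmic losses. Condition \eqref{c3} provides the exponent room to sum in the larger spatial frequency after an orthogonality argument, \eqref{c2} together with $\theta > \frac{1}{2}$ ensures the modulation sum converges, and the same-sign case (where the analogue of the bilinear $L^2$ Strichartz estimate fails) is recovered from the opposite-sign case precisely through the modulation decomposition of the first step, since forcing the two waves to have the same sign increases the output modulation and produces an extra $L^\theta$-weight to spare. I expect the most delicate bookkeeping to occur at the endpoint exponents in \eqref{c3}, where the strict inequality is what prevents divergence of the frequency sum; away from the endpoint the summation reduces to a routine geometric series in the dyadic parameters.
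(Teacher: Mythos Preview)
The paper does not prove this theorem. It is quoted as a known result, attributed to Klainerman--Tataru \cite{KlainermanTataru} (with the spatial operator $D$ version due to Selberg \cite{SelbergT}), and used only as a black-box input in Section~\ref{KTT}. There is therefore no ``paper's own proof'' to compare against; the paper's only contribution regarding this statement is the routine verification that the exponents arising in \eqref{e3} and \eqref{e4} satisfy the hypotheses \eqref{c1}--\eqref{c4}.

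Your sketch is a reasonable outline of the original Klainerman--Tataru strategy (transfer to free waves via $\theta>\frac12$, then bilinear cone restriction plus Whitney/dyadic summation), and nothing in it is obviously wrong as a high-level plan. But since the paper does not attempt a proof, the task as posed --- comparing your argument to the paper's --- is vacuous here. If you intend to supply a self-contained proof, you should be aware that you are reproving a cited result rather than filling a gap in the present paper.
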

The first time we use the theorem is in \eqref{e3} for the term $\|D^{s-1}(\nabla f\phi)\|_{L^p_tL^q_x}$.  Note $\sigma=1-s$.  Clearly $1 \leq p \leq \infty$, $ 1 \leq q < \infty$.  Next by \eqref{aspqs1} 
$\frac{2}{p}=1-\frac{1}{q}$, so \eqref{c1} holds.  Since $s<\frac{1}{2}$, $\sigma>0$, and we can see \eqref{c2} holds when we substitute $\frac{1}{2}-\frac{1}{2q}$ for $\frac{1}{p}$ in the right hand side and use $\frac{1}{q}<\frac{2}{3}s$.  Next we let $s_1=s_2$ and with $\sigma=1-s>0$, \eqref{c4} implies \eqref{c3}, so we only check \eqref{c4}.  To that end we must be able to choose $s_1$ so that $2s_1=1-\frac{2}{q}-\frac{1}{p}+s\leq 2s,$
which is equivalent to our condition on $p$ and one of the lower bounds on $\frac{1}{q}$.\\
\indent The next place we use the theorem is in \eqref{e4} for $\|D^{-1}(\nabla f\phi)\|_{L^p_tL^q_x}$, $\|D^{a-1}(\nabla f\phi)\|_{L^p_tL^q_x}$, where $p$ and $q$ are as in \eqref{cq3} and \eqref{cp3} with $0<a<\min(s,4s -1)<1$.  Then for $\sigma=1$, by the right hand side of \eqref{cp3}, \eqref{c2} holds and implies \eqref{c1}.  Note, since \eqref{c2} is true with $\sigma=1$, it is true with $\sigma=1-a$.  Next, for $\sigma=1$ \eqref{c4} gives \eqref{c3} and also for $\sigma=1-a$ as long as $0<a<1$.  So again it is sufficient to see we can have $s_1$ defined by \eqref{c4} such that $s_1\leq s$, but for $\sigma=1-a$ that follows from the left hand side of \eqref{cp3}, and shows we can find it for $\sigma=1$ as well.
\bibliography{magdasbib}

\begin{thebibliography}{10}

\bibitem{AtiyahHitchin}
Michael Atiyah and Nigel Hitchin.
\newblock {\em The geometry and dynamics of magnetic monopoles}.
\newblock M. B. Porter Lectures. Princeton University Press, Princeton, NJ,
  1988.

\bibitem{Czubak}
Magdalena Czubak.
\newblock Well-posedness for the space-time {M}onopole {E}quation and {W}ard
  {W}ave {M}ap.
\newblock {\em Ph.D. Thesis, University of Texas at Austin}, 2008.

\bibitem{CzubakUhlenbeck}
Magdalena Czubak and Karen Uhlenbeck.
\newblock On the existence of {C}oulomb gauges.
\newblock {\em In preparation.}

\bibitem{DCU}
Bo~Dai, Chuu-Lian Terng, and Karen Uhlenbeck.
\newblock On the space-time monopole equation.
\newblock In {\em Surveys in differential geometry. {V}ol. {X}}, volume~10 of
  {\em Surv. Differ. Geom.}, pages 1--30. Int. Press, Somerville, MA, 2006.

\bibitem{DZ}
Gianfausto Dell'Antonio and Daniel Zwanziger.
\newblock Every gauge orbit passes inside the {G}ribov horizon.
\newblock {\em Comm. Math. Phys.}, 138(2):291--299, 1991.

\bibitem{Dirac}
P.~A.~M. Dirac.
\newblock Quantised singularities in the electromagnetic field.
\newblock {\em Proceedings of the Royal Society of London. Series A, Containing
  Papers of a Mathematical and Physical Character}, 133(821):60--72, 1931.

\bibitem{FK}
Damiano Foschi and Sergiu Klainerman.
\newblock Bilinear space-time estimates for homogeneous wave equations.
\newblock {\em Ann. Sci. \'Ecole Norm. Sup. (4)}, 33(2):211--274, 2000.

\bibitem{JaffeTaubes}
Arthur Jaffe and Clifford Taubes.
\newblock {\em Vortices and monopoles}, volume~2 of {\em Progress in Physics}.
\newblock Birkh\"auser Boston, Mass., 1980.
\newblock Structure of static gauge theories.

\bibitem{KlainermanMachedon93}
S.~Klainerman and M.~Machedon.
\newblock Space-time estimates for null forms and the local existence theorem.
\newblock {\em Comm. Pure Appl. Math.}, 46(9):1221--1268, 1993.

\bibitem{KlainermanMachedon95}
S.~Klainerman and M.~Machedon.
\newblock Smoothing estimates for null forms and applications.
\newblock {\em Duke Math. J.}, 81(1):99--133 (1996), 1995.
\newblock A celebration of John F. Nash, Jr.

\bibitem{Klainerman83}
Sergiu Klainerman.
\newblock Long time behaviour of solutions to nonlinear wave equations.
\newblock In {\em Proceedings of the {I}nternational {C}ongress of
  {M}athematicians, {V}ol.\ 1, 2 ({W}arsaw, 1983)}, pages 1209--1215, Warsaw,
  1984. PWN.

\bibitem{KM3}
Sergiu Klainerman and Matei Machedon.
\newblock Estimates for null forms and the spaces {$H\sb {s,\delta}$}.
\newblock {\em Internat. Math. Res. Notices}, (17):853--865, 1996.

\bibitem{KS}
Sergiu Klainerman and Sigmund Selberg.
\newblock Bilinear estimates and applications to nonlinear wave equations.
\newblock {\em Commun. Contemp. Math.}, 4(2):223--295, 2002.

\bibitem{KlainermanTataru}
Sergiu Klainerman and Daniel Tataru.
\newblock On the optimal local regularity for {Y}ang-{M}ills equations in
  {${\bf R}\sp {4+1}$}.
\newblock {\em J. Amer. Math. Soc.}, 12(1):93--116, 1999.

\bibitem{Lindblad96}
Hans Lindblad.
\newblock Counterexamples to local existence for semi-linear wave equations.
\newblock {\em Amer. J. Math.}, 118(1):1--16, 1996.

\bibitem{Roe}
John Roe.
\newblock {\em Elliptic operators, topology and asymptotic methods}, volume 395
  of {\em Pitman Research Notes in Mathematics Series}.
\newblock Longman, Harlow, second edition, 1998.

\bibitem{SelbergT}
Sigmund Selberg.
\newblock Multilinear spacetime estimates and applications to local existence
  theory for nonlinear wave equations.
\newblock {\em Ph.D. Thesis, Princeton University}, 1999.

\bibitem{Selberg}
Sigmund Selberg.
\newblock Almost optimal local well-posedness of the {M}axwell-{K}lein-{G}ordon
  equations in {$1+4$} dimensions.
\newblock {\em Comm. Partial Differential Equations}, 27(5-6):1183--1227, 2002.

\bibitem{SelbergEstimates}
Sigmund Selberg.
\newblock On an estimate for the wave equation and applications to nonlinear
  problems.
\newblock {\em Differential Integral Equations}, 15(2):213--236, 2002.

\bibitem{taobook}
Terence Tao.
\newblock {\em Nonlinear dispersive equations}, volume 106 of {\em CBMS
  Regional Conference Series in Mathematics}.
\newblock Published for the Conference Board of the Mathematical Sciences,
  Washington, DC, 2006.
\newblock Local and global analysis.

\bibitem{Ward89}
R.~S. Ward.
\newblock Twistors in {$2+1$} dimensions.
\newblock {\em J. Math. Phys.}, 30(10):2246--2251, 1989.

\bibitem{Ward99}
R.~S. Ward.
\newblock Two integrable systems related to hyperbolic monopoles.
\newblock {\em Asian J. Math.}, 3(1):325--332, 1999.
\newblock Sir Michael Atiyah: a great mathematician of the twentieth century.

\bibitem{Zhou}
Yi~Zhou.
\newblock Local existence with minimal regularity for nonlinear wave equations.
\newblock {\em Amer. J. Math.}, 119(3):671--703, 1997.

\end{thebibliography}
\bibliographystyle{plain}
\vspace{.125in}
 
\end{document}